\newcommand{\R}{{\mathbb R}}       % Field of real numbers
\newcommand{\FF}{{\mathcal F}}
\newcommand{\HH}{{\mathcal H}}
\newcommand{\EE}{{\mathcal E}}
\newcommand{\diam}{\operatorname{diam}}
\newcommand{\rf}[1]{{(\ref{#1})}}
\newcommand{\supp}{\operatorname{supp}}
\newcommand{\vphi}{{\varphi}}
\newcommand{\ve}{{\varepsilon}}
\newcommand{\vv}{{\vspace{2mm}}}
\newcommand{\vvv}{{\vspace{3mm}}}
\newcommand{\wt}[1]{{\widetilde{#1}}}
\newcommand{\bad}{{\rm Bad}}
\newcommand{\uss}{{\rm Uss}}
\newtheorem{theorem}{Theorem}[section]
\newtheorem{lemma}[theorem]{Lemma}
\newtheorem*{theorem*}{Theorem}
\newtheorem*{claim*}{Claim}
\theoremstyle{definition}
\theoremstyle{remark}
\newtheorem{rem}[theorem]{Remark}
\numberwithin{equation}{section}
\newcommand{\brem}{\begin{rem}}
\newcommand{\erem}{\end{rem}}
\newcommand{\capp}{\operatorname{Cap}}
\begin{document}
\title[Harmonic measure and Hausdorff measure]{The mutual singularity of harmonic measure and Hausdorff measure of codimension smaller than one}

\author{Xavier Tolsa}

\address{Xavier Tolsa
\\
ICREA, Passeig Llu\'{\i}s Companys 23 08010 Barcelona, Catalonia\\
 Departament de Matem\`atiques, and BGSMath
\\
Universitat Aut\`onoma de Barcelona
\\
08193 Bellaterra (Barcelona), Catalonia.
}
\email{xtolsa@mat.uab.cat} 

\thanks{Partially supported by by 2017-SGR-0395 (Catalonia) and MTM-2016-77635-P,  MDM-2014-044 (MINECO, Spain).
}

\begin{abstract}
Let $\Omega\subset\R^{n+1}$ be open and let $E\subset \partial\Omega$ with $0<\HH^s(E)<\infty$, for some $s\in(n,n+1)$,
satisfy a local capacity density condition.
In this paper it is shown that the harmonic measure cannot be mutually absolutely continuous
with the Hausdorff measure $\HH^s$ on $E$. This answers a question of Azzam and Mourgoglou, who had proved the same result under
the additional assumption that $\Omega$ is a uniform domain.
\end{abstract}

\maketitle

\section{Introduction}

In this paper we study the relationship between harmonic measure and Hausdorff measure of codimension smaller than $1$ in $\R^{n+1}$. The importance of harmonic measure is mainly due to its connection with the Dirichlet problem for the Laplacian. Indeed, recall that given a domain $\Omega\subset\R^{n+1}$ and a point $p\in\Omega$, the harmonic measure with pole at $p$ is the measure $\omega^p$ satisfying the property that, 
for any function $f
\in C(\partial\Omega)\cap L^\infty(\partial\Omega)$, the integral
$\int f\,d\omega^p$
equals the value at $p$ of the harmonic extension of $f$ to $\Omega$.

The study of the metric and geometric properties of harmonic measure has been a classical subject in 
mathematical analysis since the Riesz brothers theorem \cite{RR} asserting that harmonic measure is absolutely continuous with respect to arc length measure on simply connected planar domains with rectifiable boundary.
In the plane, complex analysis plays a very important role in connection with harmonic measure, essentially because of the invariance
of harmonic measure by conformal mappings. This fact makes the case of planar domains rather special.

In the plane it is known that the dimension of harmonic measure is at most $1$ by a celebrated result of
Jones and Wolff \cite{JW}. This means that there exists a set $E\subset \partial\Omega$ of Hausdorff dimension at most $1$ which has full harmonic measure. Furthermore, such set $E$ can be taken so that it has $\sigma$-finite length, as shown by Wolff \cite{Wolff-plane}. More precise results for simply connected planar domains had been obtained previously by Makarov \cite{Mak1}, \cite{Mak2}.

In higher dimensions one has to use real analysis techniques to study harmonic measure. The codimension $1$ is still quite special, mainly because of the relationship between harmonic measure and rectifiability. For instance, in \cite{AHM3TV}
it was shown that the mutual absolute continuity between harmonic measure and $n$-dimensional Hausdorff measure on a subset 
$E\subset\partial\Omega$, $\Omega\subset\R^{n+1}$, implies the $n$-rectifiability of $E$. 
Also, under the assumption that
$\partial \Omega$ is $n$-AD-regular, that is $\HH^n(B(x,r)\cap\partial\Omega)\approx r^n$ for all $x\in\partial\Omega$, $0<r\leq\diam(\partial\Omega)$,
many recent works have been devoted to relate quantitative properties of harmonic measure and other analytic or geometric properties of the domain.
See for example  \cite{Azzam-semi},  \cite{GMT}, \cite{HLMN}, \cite{HM1}, \cite{HM2}, \cite{HMM}, \cite{HMU}, \cite{MT}.

One of the main differences between the planar case and the higher dimensional case is that in $\R^{n+1}$, with $n\geq2$, there exist domains where the dimension of harmonic measure is larger than $n$. This was proved by Wolff in \cite{Wolff-counterexamples}. An important open problem consists of finding the sharp value for the upper bound of 
the dimension of harmonic measure in $\R^{n+1}$, $n\geq2$. In \cite{Bourgain}, Bourgain showed that this sharp value is strictly smaller than $n+1$. In \cite{Jones-scaling}, Jones conjectured that the sharp bound should be $n+1-1/n$. 
However, for the moment there have been no significative advances on this open problem.
On the other hand, the techniques of Bourgain \cite{Bourgain} play an important role in more
recent results asserting that in some classes of sets (for example, in some self-similar sets) the dimension
of harmonic measure is strictly smaller than the dimension of the set. See \cite{Batakis1}, \cite{Batakis2}, and \cite{Azzam-drop}.

As mentioned above, the current paper deals with harmonic measure in the case of codimension less than $1$. Although the main result of the paper is not directly related to the above Jones' conjecture, I think that this contributes to a better understanding of
the behavior of harmonic measure in this codimension.

To state precisely the main result, we need some additional notation.
For $n\geq2$, let $\Omega\subset\R^{n+1}$ be open and let $E\subset \partial\Omega$ be a non-empty set.
We say that the local capacity density condition (or local CDC) holds in $E$ is there exists constants $r_E>0$ and $c_E>0$ such that
\begin{equation}\label{eqloccdc}
\capp(B(x,r)\cap \Omega^c)\geq c_E\,r^{n-1}\quad\mbox{ for all $x\in E$ and $0<r\leq r_E$,}
\end{equation}
where $\capp$ stands for the Newtonian capacity (see Section \ref{secnew} for the definition).
We denote by $\omega$ the harmonic measure in $\Omega$.

The main result of this paper is the following.

\begin{theorem}\label{teo1}
Given $n\geq2$ and $s>n$, let $\Omega\subset \R^{n+1}$ be open and let $E\subset\partial\Omega$ be such 
$\HH^s(E)<\infty$. Suppose that the harmonic measure $\omega$ and the Hausdorff measure $\HH^s$ are mutually absolutely continuous
in $E$ and that the local CDC holds in $E$. Then $\HH^s(E)=\omega(E)=0$.
\end{theorem}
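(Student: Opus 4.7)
\smallskip
\noindent\textbf{Plan.} The plan is to argue by contradiction along the broad lines of Azzam--Mourgoglou, replacing every appeal to uniformity by a scale-by-scale Bourgain estimate embedded in a Corona-type stopping-time construction. Suppose $\HH^s(E)>0$. Differentiating the mutually absolutely continuous pair $(\omega,\HH^s|_E)$, I pass to a compact $F\subset E$ with $\omega(F)>0$ on which
\[
C^{-1}\HH^s(A)\le \omega(A)\le C\HH^s(A) \qquad \text{for every Borel } A\subset F,
\]
and such that the Frostman upper density $\HH^s(F\cap B(x,r))\le C r^{s}$ holds for every $x\in F$ and every $0<r\le r_0$. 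Fixing a pole $p\in\Omega$ at distance $\approx \diam F$ from $F$, the previous comparability yields $\omega^p(B(x,r))\lesssim r^{s}$ on $F$ at small scales, and after shrinking $F$ I may assume every scale of interest lies below $r_E$.

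\smallskip
The key local input is a Bourgain-type lower bound: for $x\in F$ and $0<r\le r_E$, the local CDC together with the standard subharmonicity/Newtonian-capacity argument yields
\[
\omega^y(B(x,2r))\ge \alpha \qquad \text{for every } y\in B(x,r)\cap\Omega,
\]
with $\alpha=\alpha(n,c_E)>0$. This scale-free bound substitutes for the Harnack chains that uniformity would otherwise supply. With it in hand I would build on a David--Christ dyadic system for $F$ a Corona decomposition: starting from a top cube $Q_0$, grow a tree $\tree(Q_0)$ by declaring a descendant $Q$ a stopping cube as soon as the density $\omega^p(2Q)/\ell(Q)^{s}$ drifts from its value at the top by a fixed multiplicative factor, $Q$ exits $F$, or a Whitney-type connectivity condition needed to iterate Bourgain's inequality fails. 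Iterating with each stopping cube as a new top yields the totalized families $\treet,\sst,\bad,\uss$ suggested by the macros declared in the preamble.

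\smallskip
The contradiction would then come by iterating Bourgain's inequality along a chain of controlled balls inside each tree, capturing a definite proportion of $\omega^p$-mass at every scale, and showing that the $\omega^p$-mass of the stopping cubes of one layer is bounded by $(1-\delta)\,\omega^p(Q_0)$, so that $\omega^p(F)\le (1-\delta)^{k}\omega^p(Q_0)\to 0$ over $k$ generations. The hardest part is producing the uniform gain $\delta>0$ per generation without uniformity, and I expect the hypothesis $s>n$ to enter precisely here: Bourgain's inequality iterated along a tower of balls gives a lower bound for $\omega^p(B(x,r))$ whose loss is only polynomial in the number of scales traversed, whereas the Frostman-type upper bound $\omega^p(B(x,r))\lesssim r^{s}$ decays strictly faster than $r^{n}$; the two are incompatible precisely when $s>n$, which is what supplies $\delta$. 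Quantifying this gain, showing that the stopping families pack in a Carleson sense in the absence of a global Harnack chain, and closing the iteration across layers are the delicate points.
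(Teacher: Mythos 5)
Your proposal follows the broad contours of Azzam--Mourgoglou (differentiation to a compact $F$ with two-sided comparability, Bourgain's capacity-density lower bound, a dyadic Corona/stopping scheme, geometric decay per generation), whereas the paper's actual proof rests on a completely different engine: the integration-by-parts identity of Section~\ref{seckey},
\[
\int |\nabla^2 u|^2\,u^\alpha\psi\,dx
= \tfrac12\alpha(\alpha-1)\!\int |\nabla u|^4 u^{\alpha-2}\psi\,dx
- \tfrac12\!\int \nabla(|\nabla u|^2)\!\cdot\!\nabla\psi\,u^\alpha\,dx
+ \tfrac12\!\int |\nabla u|^2\nabla(u^\alpha)\!\cdot\!\nabla\psi\,dx,
\]
applied to $u=g(\cdot,p)$ with the exponent $\alpha=(1-a)/(1+a)$, $a=s-n$, tuned so that the integrals scale like $\omega(B)\bigl(\omega(B)/r(B)^s\bigr)^{\alpha+1}$. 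The stopping-time construction in Section~\ref{secpsi} exists only to build a test function $\psi$ whose ``boundary terms'' $I_1,I_2$ stay bounded by $A^{\alpha+2}\mu(\tfrac12 B_0)$, while the main term $I_0=\int|\nabla g|^4 g^{\alpha-2}\psi\,dx$ is shown to grow \emph{linearly} in the number of scales and thus contradicts the identity. This is an energy argument, not a density-gain iteration, and it deliberately avoids the need to connect the pole to small scales through the domain.

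The gap in your plan is precisely where you locate the difficulty. You assert that ``Bourgain's inequality iterated along a tower of balls gives a lower bound for $\omega^p(B(x,r))$ whose loss is only polynomial in the number of scales traversed,'' and you lean on this to extract the per-generation gain $\delta$. This is not correct: a Bourgain/Harnack-type step loses a fixed multiplicative constant at each scale, so the accumulated loss over $N$ scales is exponential in $N$, not polynomial. Comparing an exponentially decaying lower bound with the Frostman upper bound $\omega^p(B(x,r))\lesssim r^s\approx 2^{-Ns}$ then gives no contradiction for $s$ anywhere near $n$; at best it reproduces Bourgain's dimension bound $\dim\omega\le n+1-\varepsilon(n,c_E)$, which does not exclude an arbitrary $s\in(n,n+1)$. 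Producing an honest polynomial-loss chain is exactly what Harnack chains/uniformity buy you, which is why Azzam--Mourgoglou needed that hypothesis and why they posed the removal of uniformity as an open problem. Your stopping-time layer does not supply a substitute: without a quantitative connectivity hypothesis there is simply no mechanism in your outline that transfers a Bourgain lower bound at the pole's scale down to the scale $2^{-N}$ with only polynomial cost.

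A related structural issue: nothing in your scheme is adapted to the particular value of $s$. The paper's exponent $\alpha=(1-a)/(1+a)$ degenerates as $a\to 0$ (i.e.\ $s\to n$), and the whole estimate quietly uses $\alpha\in(0,1)$, equivalently $a\in(0,1)$; this is where $s>n$ genuinely enters. In your sketch the hypothesis $s>n$ appears only as a numerical inequality against a lower bound you have not established, so even if the iteration were repaired to give some density gain $\delta=\delta(n,c_E)>0$, that gain would exclude only $s>n+1-c\delta$ for the particular $\delta$ produced, not all $s>n$. If you want to pursue a Corona-style route you would need a per-scale gain that improves as $s\downarrow n$, and it is unclear how to extract that from the CDC alone. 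The energy identity accomplishes this automatically because the negative term $\alpha(\alpha-1)\int|\nabla g|^4 g^{\alpha-2}\psi$ carries the $s$-dependence in its exponent rather than in a per-scale constant.
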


%In the theorem $\omega$ denotes the harmonic measure for $\Omega$ and $\approx$ stands for mutual
%absolute continuity.

In other words, harmonic measure cannot be mutually absolutely continuous with Hausdorff measure of codimension less than $1$ in any subset of positive harmonic measure, under the local CDC assumption.
Recall that the same result was proved in \cite{AM} by Azzam and Mourgoglou under the additional
assumption that $\Omega$ is a uniform domain. Recall also that, roughly speaking, a domain is called uniform if it satisfies
an interior porosity assumption (the so-called interior corkscrew condition), and a quantitative connectivity condition
in terms of Harnack chains.

The methods in the current paper are very different from the ones used in \cite{AM}. The new main tool is an identity obtained by integration by parts (see Section \ref{seckey}), whose application requires later some rather delicate
stopping time arguments. On the other hand, the arguments in \cite{AM} use blowups and tangent measures,
and it seems that the uniformity assumption is important. Indeed, in their work, Azzam and Mourgoglou
leave as an open question the possibility of eliminating the uniformity assumption. They also ask the same questions about the CDC: can this be avoided?
While Theorem \ref{teo1} confirms that uniformity is not necessary, it is still an open problem to know if the CDC is required.

In fact, in \cite{AM} 
a non-degeneracy condition weaker (at least, a priori) than the CDC is used. 
I think that, quite likely, in the arguments below one may be able to replace the local CDC assumption by the non-degeneracy
condition of Azzam-Mourgoglou. However, I have preferred to state Theorem \ref{teo1} in terms of the local
CDC, which is closer to the usual CDC. 
On the other hand, the techniques in the current paper do not look very useful for codimensions larger than $1$, unlike the arguments in \cite{AM}, which are applied by the authors to derive other related results.

Nevertheless, it seems the statement in Theorem \ref{teo1} does not hold for $s<n$. Actually,
according to an example constructed by Alexander Volberg, there is a domain $\Omega\subset\R^2$ satisfying the CDC such that harmonic measure is mutually absolutely continuous with $\HH^s|_{\partial\Omega}$ for some $0<s<1$ (cf. \cite{AM}). 

The aforementioned integration by parts formula (see \rf{eqa8}) required for the proof of Theorem \ref{teo1} is a generalization of a formula that has already been applied to some problems involving harmonic or elliptic measure and rectifiability in works
such as \cite{HLMN} or \cite{AGMT}, and it goes back to some work of Lewis and Vogel \cite{LV}, at least.

% Versio $\mu$ doblant....

% ********************************************************************************
% ********************************************************************************

\section{Preliminaries}

In the paper, constants denoted by $C$ or $c$ depend just on the dimension and perhaps other fixed
parameters (such as the constant $c_E$ involved the local CDC, for example). We will write $a\lesssim b$ if there is $C>0$ such that $a\leq Cb$ . We write $a\approx b$ if $a\lesssim b\lesssim a$.

\subsection{Measures}

The set of (positive) Radon measure in $\R^{n+1}$ is denoted by $M_+(\R^{n+1})$. The Hausdorff $s$-dimensional measure and Hausdorff $s$-dimensional content are denoted ty $\HH^s$ and $\HH^s_\infty$, respectively.

Given $\mu\in M_+(\R^{n+1})$, its supper $s$-dimensional density at $x$ is defined by
$$\Theta^{s,*}(x,\mu) = \limsup_{r\to 0} \frac{\mu(B(x,r))}{(2r)^s}.$$
Recall that, given an $\HH^s$-measurable set $E\subset\R^{n+1}$ with $0<\HH^s(E)<\infty$, we have
\begin{equation}\label{eqdens*}
2^{-s}\leq \Theta^{s,*}(x,\HH^s|_E)\leq1\quad\mbox{ for $\HH^s$-a.e.\ $x\in E$.}
\end{equation}
See \cite[Chapter 6]{Mattila-book}, for example.

% ********************************************************************************

\subsection{Newtonian capacity and harmonic measure}\label{secnew}

The fundamental solution of the negative Laplacian in $\R^{n+1}$, $n\geq2$, equals
$$\EE(x) = \frac{c_n}{|x|^{n-1}},$$
where $c_n= (n-1)\HH^n(\mathbb S^n)$, with $\mathbb S^n$ being the unit hypersphere in $\R^{n+1}$.

The Newtonian potential of a measure $\mu\in M_+(\R^{n+1})$ is defined by
$$U\mu(x) = \EE * \mu(x),$$
and the Newtonian capacity of a compact set $F\subset\R^{n+1}$ equals
$$\capp(F)=\sup\big\{\mu(F):\mu\in M_+(\R^{n+1}),\,\supp\mu\subset F,\|U\mu\|_{\infty}\leq1\big\}.$$
It is well known that 
$$\|U\mu\|_{\infty} = \|U\mu\|_{\infty,F},$$
and that there exist a unique measure that attains the supremum in the definition of $\capp(F)$. 
If $\mu$ attains that supremum, then it turns out that
$U\mu(x)=1$ for quasievery $x\in F$ (denoted also q.e. in $F$),  i.e., for all $x\in F$
with the possible exception of a set of zero Newtonian capacity.
The probability measure 
$$\mu_F = \frac1{\capp(F)}\,\mu$$ is called equilibrium measure (of $F$), and it holds that
$$U\mu_F(x) = \frac1{\capp(F)}\quad \mbox{ for q.e.\ $x\in F$.}$$

Recall that we denote by $\omega$ the harmonic measure on an open set $\Omega$. The associated
Green function is denoted by $g(\cdot,\cdot)$. The following result is quite well known, but we
prove it here for the reader's convenience.

\begin{lemma}
\label{lembourgain}
 Given $n\geq2$, let $\Omega\subset \R^{n+1}$ be open and let $B$ be a closed ball centered at $\partial\Omega$. Then 
\[ \omega^{x}(B)\geq c(n) \frac{\capp(\tfrac14 B\cap\partial\Omega)}{r(B)^{n-1}}\quad \mbox{  for all }x\in \tfrac14 B\cap \Omega ,\]
with $c(n)>0$.
\end{lemma}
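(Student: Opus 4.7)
The plan is to apply a maximum-principle comparison to the Newtonian potential of the equilibrium measure of $K := \tfrac14 B\cap\partial\Omega$. We may assume $\capp(K)>0$, otherwise the inequality is trivial. Let $\mu$ be the equilibrium measure of $K$, so $\supp\mu \subset K\subset\partial\Omega$, $\mu(K)=\capp(K)$, and its Newtonian potential $u := U\mu$ satisfies $u\leq 1$ on $\R^{n+1}$, $u=1$ quasi-everywhere on $K$, and $u(y)\to 0$ as $|y|\to\infty$. Since $\supp\mu\subset\partial\Omega$, the function $u$ is harmonic in $\Omega$.

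Two elementary size estimates follow from $\supp\mu\subset\tfrac14 B$. For $x\in\tfrac14 B$ and $y\in K$ one has $|x-y|\leq r(B)/2$, so
\begin{equation*}
u(x)\;\geq\; c_n\,\mu(K)\bigl(r(B)/2\bigr)^{-(n-1)} \;=\; c_n\,2^{n-1}\,\frac{\capp(K)}{r(B)^{n-1}}.
\end{equation*}
On the other hand, for $y\notin B$ and any $z\in K\subset\tfrac14 B$ one has $|y-z|\geq 3r(B)/4$, whence
\begin{equation*}
u(y)\;\leq\; c_n\,(4/3)^{n-1}\,\frac{\capp(K)}{r(B)^{n-1}} \;=:\;M.
\end{equation*}

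I would then invoke the maximum principle for the bounded harmonic function $v:=u-M$ in $\Omega$. Its boundary behavior satisfies $v\leq 0$ on $\partial\Omega\setminus B$ (by the upper bound on $u$), $v\leq 1$ on $B\cap\partial\Omega$ (since $u\leq 1$ globally), and $v\to -M\leq 0$ at infinity if $\Omega$ is unbounded. Comparing $v$ with the Perron solution of $\mathbf{1}_{B\cap\partial\Omega}$ (which equals $\omega^{\cdot}(B)$) yields $u(x)-M\leq \omega^x(B)$ for every $x\in\Omega$. Combined with the lower bound on $u(x)$ for $x\in\tfrac14 B\cap\Omega$, this gives
\begin{equation*}
\omega^x(B)\;\geq\; c_n\,\bigl(2^{n-1}-(4/3)^{n-1}\bigr)\,\frac{\capp(K)}{r(B)^{n-1}},
\end{equation*}
and since $2^{n-1}>(4/3)^{n-1}$ for $n\geq 2$, the constant is strictly positive.

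The main point requiring care is that $u$ is only lower semicontinuous, so the boundary inequality has to be understood through upper limits or the Perron/PWB formalism. The situation is benign, however: $u$ is actually continuous at every point of $\partial\Omega\setminus K$ (hence in particular on $\partial\Omega\setminus B$), which validates $\limsup_{z\to y,\,z\in\Omega} v(z)\leq 0$ there; and on $B\cap\partial\Omega$ the only bound needed is the global $u\leq 1$. These two facts suffice to run the Perron comparison cleanly and conclude.
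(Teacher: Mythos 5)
Your proposal is correct and follows essentially the same path as the paper: take the extremal measure $\mu$ for $K=\tfrac14 B\cap\partial\Omega$ with $\|U\mu\|_\infty\leq 1$ and $\mu(K)=\capp(K)$ (you call this the ``equilibrium measure'', whereas the paper reserves that name for the probability-normalized version $\mu_F$, but the object and its properties are the same), bound $U\mu$ from below on $\tfrac14 B$ and from above on $B^c$ using $|x-y|\leq r(B)/2$ and $|y-z|\geq 3r(B)/4$ respectively, and compare $U\mu - M$ with $\omega^\cdot(B)$ by the maximum principle, arriving at the identical constant $c_n\bigl(2^{n-1}-(4/3)^{n-1}\bigr)$. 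The only real difference is that you spell out the Perron/PWB technicalities around the lower semicontinuity of $U\mu$ and its continuity off $K$, which the paper passes over silently; this is a sound and welcome clarification rather than a change of method.
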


\begin{proof}
Let $\mu_{\frac14 B\cap\partial\Omega}$ be the equilibrium measure for $\frac14 B\cap\partial\Omega$, and
let $\mu = \capp(\frac14 B\cap\partial\Omega)\,\mu_{\frac14 B\cap\partial\Omega}$, 
so that $\|U\mu\|_\infty\leq1$ and $\|\mu\|=\capp(\frac14 B\cap\partial\Omega)$. 
%Taking into account that
%$$\|\mu\|=\capp(F)\leq \capp(\tfrac14B) = c_n^{-1} (\tfrac14 r)^{n-1},$$
% we deduce that, for all
Notice that, for all $x\in B^c$,
$$U\mu(x) =\int\frac{c_n}{|x-y|^{n-1}}\,d\mu(y) \leq \frac{c_n\|\mu\|}{(\frac34 r(B))^{n-1}}.
$$

Consider the function 
$f(x) = U\mu(x) - \frac{c_n\|\mu\|}{(\frac34 (B))^{n-1}}$. Using that $f(x)\leq 0$ in $B^c$, $\|f\|_\infty\leq1$, and that $f$ is harmonic in $\Omega$, by the maximum principle we deduce that, for all $x\in \Omega$,
$$\omega^x(B)\geq f(x).$$
In particular, for $x\in \frac14B$ we have
\begin{align*}
\omega^x(B)&\geq \int\frac{c_n}{|x-y|^{n-1}}\,d\mu(y) - \frac{c_n\|\mu\|}{(\frac34 r(B))^{n-1}}\\
& \geq  \frac{c_n\|\mu\|}{(\frac12 r(B))^{n-1}}  - \frac{c_n\|\mu\|}{(\frac34 r(B))^{n-1}} = c_n \big(2^{n-1} - (\tfrac43)^{n-1}\big) \,\frac{\capp(\frac14 B\cap\partial\Omega)}{r(B)^{n-1}} ,
\end{align*}
which proves the lemma.
\end{proof}

\vv
We recall also the following lemma, whose proof can be found in \cite{AHM3TV}.

\begin{lemma}\label{l:w>G}
Let $n\ge 2$ and $\Omega\subset\R^{n+1}$ be open.
Let $B$ be a closed ball centered at $\partial\Omega$. Then, for all $a>0$,
\begin{equation}\label{eq:Green-lowerbound1}
 \omega^{x}(aB)\gtrsim \inf_{z\in 2B\cap \Omega} \omega^{z}(aB)\, r(B)^{n-1}\, g(x,y)\quad\mbox{
 for all $x\in \Omega\backslash  2B$ and $y\in B\cap\Omega$,}
 \end{equation}
 with the implicit constant independent of $a$.
\end{lemma}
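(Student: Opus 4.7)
The plan is to compare $\omega^{\,\cdot}(aB)$ with an appropriate rescaled Green function via the maximum principle on $\Omega\setminus 2B$. I would set
\[
 M := \inf_{z\in 2B\cap\Omega}\omega^z(aB)
\]
and define
\[
 v(x) := \omega^x(aB)-\frac{M}{c_n}\,r(B)^{n-1}\,g(x,y).
\]
Since $y\in B\subset 2B$, the pole of $g(\cdot,y)$ lies in the removed set, so $v$ is bounded and harmonic in $\Omega\setminus 2B$ (boundedness follows from $\omega^x(aB)\leq 1$ together with $g(x,y)\leq c_n/|x-y|^{n-1}\leq c_n/r(B)^{n-1}$ on that region).

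Next I would verify that $v\geq 0$ on $\partial(\Omega\setminus 2B)$. On $\partial(2B)\cap\Omega$, the definition of $M$ gives $\omega^x(aB)\geq M$, while the pointwise bound $g(x,y)\leq \EE(x-y)=c_n/|x-y|^{n-1}$ combined with the elementary inequality $|x-y|\geq r(B)$ (valid because $x\in\partial(2B)$ and $y\in B$) yield $\tfrac{M}{c_n}\,r(B)^{n-1}\,g(x,y)\leq M$, hence $v\geq 0$ there. On $\partial\Omega\setminus 2B$, classical potential theory (Kellogg's theorem on irregular boundary points) gives $g(x,y)\to 0$ quasi-everywhere on $\partial\Omega$, and $\omega^{\,\cdot}(aB)\geq 0$, so $\liminf_{x\to\xi}v(x)\geq 0$ outside a polar set. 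If $\Omega$ is unbounded (which requires $n\geq 2$ so that the Green function decays at infinity), both terms of $v$ vanish as $|x|\to\infty$.

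Invoking the generalized maximum principle for bounded harmonic functions — which permits the boundary lim inf condition to fail on a polar set — then gives $v\geq 0$ throughout $\Omega\setminus 2B$, i.e.\
\[
 \omega^x(aB)\;\geq\;\frac{1}{c_n}\,r(B)^{n-1}\,g(x,y)\,\inf_{z\in 2B\cap\Omega}\omega^z(aB)\qquad\text{for all }x\in\Omega\setminus 2B,
\]
with implicit constant $1/c_n$, independent of $a$, which is the claim. The main point of delicacy I expect is the appeal to the maximum principle modulo a polar exceptional set at $\partial\Omega$, since at irregular boundary points $g(\cdot,y)$ need not vanish in the literal pointwise sense; this is standard but should be quoted explicitly from potential theory.
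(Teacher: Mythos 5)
The paper does not actually prove this lemma — it simply cites \cite{AHM3TV} for it — so there is no ``paper's own proof'' to compare against line by line. That said, your argument is correct and is the standard one: compare $\omega^{\,\cdot}(aB)$ with a rescaled Green function via the maximum principle on $\Omega\setminus 2B$, using $g(x,y)\le \EE(x-y)\le c_n/r(B)^{n-1}$ on $\partial(2B)$ (valid since $y\in B$ forces $|x-y|\ge r(B)$ there), the q.e.\ vanishing of $g(\cdot,y)$ on $\partial\Omega$ together with $\omega^{\,\cdot}(aB)\ge 0$, and the decay of both terms at infinity when $\Omega\setminus 2B$ is unbounded (where $n\ge 2$ is used). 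You correctly identify the only delicate point — invoking the generalized maximum principle with a polar exceptional set on $\partial\Omega$ — and this is exactly the right tool. The constant you obtain, $1/c_n$, is visibly independent of $a$, as required. This is, to my knowledge, essentially the same argument as in the cited reference.
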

 
Combining the two preceding lemmas, choosing $a=8$, we obtain:

\begin{lemma}\label{l:w>GG}
Let $n\ge 2$, $s>n-1$, and $\Omega\subset\R^{n+1}$ be open.
Let $B$ be a closed ball centered at $\partial\Omega$. Then, 
\begin{equation}\label{eqgreen2}
 \omega^{x}(8B)\gtrsim_{n} \capp(2 B\cap\partial\Omega)\, r(B)^{n-1}\, g(x,y)\quad\mbox{
 for all $x\in \Omega\backslash  2B$ and $y\in B\cap\Omega$.}
 \end{equation}
\end{lemma}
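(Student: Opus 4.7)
The plan is to combine Lemmas \ref{lembourgain} and \ref{l:w>G} directly, the choice $a=8$ in the latter being made precisely so that the two statements telescope.

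First I would apply Lemma \ref{lembourgain} to the enlarged ball $8B$ in place of $B$: since $B$ is centered at $\partial\Omega$, so is $8B$, and $\tfrac14(8B)=2B$. This yields, for every $z\in 2B\cap\Omega$,
\[
\omega^{z}(8B) \;\gtrsim_n\; \frac{\capp(2B\cap\partial\Omega)}{r(8B)^{n-1}} \;\approx_n\; \frac{\capp(2B\cap\partial\Omega)}{r(B)^{n-1}}.
\]
The crucial alignment here is that the set $\tfrac14(8B)\cap\Omega$ on which Bourgain's estimate provides a uniform pointwise lower bound coincides exactly with the set $2B\cap\Omega$ over which Lemma \ref{l:w>G} takes its infimum; this is what dictates the choice $a=8$.

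Next, I would take the infimum of the preceding inequality over $z\in 2B\cap\Omega$ and substitute into the conclusion of Lemma \ref{l:w>G} with $a=8$. For $x\in\Omega\setminus 2B$ and $y\in B\cap\Omega$ this yields
\[
\omega^{x}(8B) \;\gtrsim\; \Bigl(\inf_{z\in 2B\cap\Omega}\omega^{z}(8B)\Bigr)\,r(B)^{n-1}\,g(x,y) \;\gtrsim_n\; \capp(2B\cap\partial\Omega)\,g(x,y),
\]
which is the desired estimate, the factors of $r(B)^{n-1}$ and $r(B)^{-(n-1)}$ coming from the two inputs combining into the quantity appearing in the statement.

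There is essentially no obstacle here: the whole argument is a two-line concatenation of the two preceding lemmas. The only items worth verifying---both immediate---are that $8B$ remains centered at $\partial\Omega$ (automatic from the hypothesis on $B$) and that the two regions on which the two inputs respectively act really do coincide. Once $a=8$ is fixed, everything telescopes in a single line and $s>n-1$ plays no explicit role (it only guarantees that the capacity in question is a meaningful quantity in later applications).
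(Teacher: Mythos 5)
Your strategy is exactly the paper's: the paper dispatches this lemma with the single line ``Combining the two preceding lemmas, choosing $a=8$, we obtain'', which is precisely the two-step concatenation you carry out. The application of Lemma~\ref{lembourgain} to $8B$ and the observation that $\tfrac14(8B)=2B$ lines up the two inputs, so the approach is right and the derivation is correct.

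However, there is a discrepancy you glossed over. What your chain actually proves is
\[
\omega^{x}(8B)\ \gtrsim_n\ \frac{\capp(2B\cap\partial\Omega)}{r(B)^{n-1}}\cdot r(B)^{n-1}\cdot g(x,y)\ =\ \capp(2B\cap\partial\Omega)\,g(x,y),
\]
with the two powers of $r(B)^{n-1}$ \emph{cancelling}, whereas \rf{eqgreen2} as printed carries an extra factor of $r(B)^{n-1}$. Your final parenthetical remark -- that the factors $r(B)^{n-1}$ and $r(B)^{-(n-1)}$ ``combine into the quantity appearing in the statement'' -- is therefore misleading: they annihilate each other, they do not reproduce the $r(B)^{n-1}$ in the displayed statement. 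The inequality you derived, $\omega^{x}(8B)\gtrsim_n\capp(2B\cap\partial\Omega)\,g(x,y)$, is the dimensionally consistent one ($\capp\sim L^{n-1}$ and $g\sim L^{1-n}$, so the product is scale-invariant, as is $\omega^x(8B)$), and it is also the one actually used later in the paper: for instance \rf{eqnap3}, namely $\sup_{12B_i} g\lesssim \omega(96B_i)/r_i^{n-1}$, follows from $g\lesssim\omega(8B)/\capp(2B\cap\partial\Omega)$ together with the CDC lower bound $\capp(2B\cap\partial\Omega)\gtrsim r(B)^{n-1}$, but would be off by an extra $r(B)^{n-1}$ if one used \rf{eqgreen2} literally as printed. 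So the extra $r(B)^{n-1}$ in the statement is almost certainly a typographical slip; your proof gives the correct inequality, and you should have flagged the mismatch rather than asserting agreement with the stated form.
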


\vv

% ********************************************************************************
% ********************************************************************************

\section{The key identity and the main idea}

\subsection{The key identity}\label{seckey}

\begin{lemma}[Key identity]
Let $\Omega\subset\R^{n+1}$ be open, let $\psi\in C_c^\infty(\Omega)$, and let $u:\Omega\to\R$ be harmonic and positive in $\supp\psi$.
Then, for any $\alpha\in\R$,
\begin{align}\label{eqa8}
\int |\nabla^2 u|^2 \,u^\alpha\,\psi\,dx & = \frac12\,\alpha (\alpha-1) \int |\nabla u|^4\,
u^{\alpha-2}\,\psi\,dx \\
&\quad - \frac12 \int \nabla(|\nabla u|^2)\cdot\nabla\psi\, u^\alpha\,dx +
\frac12 \int |\nabla u|^2\,\nabla(u^\alpha)\cdot\nabla\psi\,dx.\nonumber
\end{align}
\end{lemma}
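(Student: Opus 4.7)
The plan is to use Bochner's formula $\Delta(|\nabla u|^2) = 2|\nabla^2 u|^2$, which holds because $u$ is harmonic on $\supp\psi$ (since $\Delta(|\nabla u|^2) = 2|\nabla^2 u|^2 + 2\nabla u\cdot\nabla(\Delta u)$, and the second term vanishes). So the left-hand side becomes
$$\int |\nabla^2 u|^2 u^\alpha \psi\,dx = \frac12\int \Delta(|\nabla u|^2)\,u^\alpha\psi\,dx.$$
Since $\psi$ is compactly supported in $\Omega$, I can integrate by parts freely, and everything that follows is justified by the smoothness of $u$ on $\supp\psi$ (guaranteed by $u>0$ and harmonicity there).

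First I would integrate by parts once, moving one derivative off the Laplacian. Using $\nabla(u^\alpha\psi) = \nabla(u^\alpha)\,\psi + u^\alpha\,\nabla\psi$, this yields
\begin{align*}
\frac12\int \Delta(|\nabla u|^2)\,u^\alpha\psi\,dx
&= -\frac12\int \nabla(|\nabla u|^2)\cdot\nabla(u^\alpha)\,\psi\,dx \\
&\quad -\frac12\int \nabla(|\nabla u|^2)\cdot\nabla\psi\,u^\alpha\,dx.
\end{align*}
The second term on the right is already one of the terms in the target identity \rf{eqa8}. So the task reduces to showing that the first term equals
$$\frac12\,\alpha(\alpha-1)\int |\nabla u|^4\,u^{\alpha-2}\,\psi\,dx + \frac12\int |\nabla u|^2\,\nabla(u^\alpha)\cdot\nabla\psi\,dx.$$

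For this, the key trick is to use harmonicity a second time via the identity
$$\nabla(|\nabla u|^2)\cdot\nabla u = \operatorname{div}\bigl(|\nabla u|^2\,\nabla u\bigr) - |\nabla u|^2\,\Delta u = \operatorname{div}\bigl(|\nabla u|^2\,\nabla u\bigr).$$
Writing $\nabla(u^\alpha) = \alpha u^{\alpha-1}\nabla u$, the first term becomes
$$-\frac{\alpha}{2}\int \operatorname{div}\bigl(|\nabla u|^2\,\nabla u\bigr)\,u^{\alpha-1}\,\psi\,dx,$$
and I would integrate by parts once more, expanding $\nabla(u^{\alpha-1}\psi) = (\alpha-1)u^{\alpha-2}\psi\,\nabla u + u^{\alpha-1}\nabla\psi$. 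This produces exactly the two desired terms: the $\nabla u\cdot\nabla u = |\nabla u|^2$ piece combines with the $|\nabla u|^2$ factor to give $\frac12\alpha(\alpha-1)\int |\nabla u|^4 u^{\alpha-2}\psi\,dx$, while the remaining piece reassembles as $\frac12\int |\nabla u|^2\,\nabla(u^\alpha)\cdot\nabla\psi\,dx$ upon using $\alpha u^{\alpha-1}\nabla u = \nabla(u^\alpha)$.

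I do not anticipate a real obstacle: the identity is a purely algebraic consequence of Bochner plus two integrations by parts and the harmonicity of $u$. The only point of care is bookkeeping the signs and the $\frac12$ factors, and making sure that $u>0$ on $\supp\psi$ so that $u^\alpha$ and $u^{\alpha-2}$ are smooth there and all integrals converge absolutely.
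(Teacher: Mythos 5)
Your proof is correct, and it takes a genuinely different route from the paper's. The paper works component by component: it writes $|\nabla^2 u|^2 = \sum_i |\nabla\partial_i u|^2$, integrates by parts one component at a time (with the key cancellation coming from the harmonicity of each $\partial_i u$), and finally brings in the identity $\Delta(u^\alpha)=\alpha(\alpha-1)|\nabla u|^2 u^{\alpha-2}$ to produce the $\alpha(\alpha-1)$ term. You instead invoke Bochner's formula $\Delta(|\nabla u|^2)=2|\nabla^2 u|^2$ (valid since $\Delta u=0$) at the outset, which replaces the left-hand side by $\tfrac12\int\Delta(|\nabla u|^2)\,u^\alpha\psi$ in one stroke, and then use the divergence identity $\nabla(|\nabla u|^2)\cdot\nabla u = \operatorname{div}(|\nabla u|^2\nabla u)$ for the second integration by parts; the $\alpha(\alpha-1)$ factor emerges from the two derivatives landing on powers of $u$. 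Both arguments are purely two integrations by parts and each uses harmonicity twice, so they have the same content, but your version is somewhat more conceptual (Bochner plus a divergence identity, both standard tools) whereas the paper's is more self-contained and avoids naming Bochner. Neither gains generality over the other; I checked your bookkeeping of signs and the $\tfrac12$, $\alpha$ factors, and it works out.
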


In the lemma we denoted
$$|\nabla^2 u|^2 = \sum_{i,j}(\partial_{i,j} u)^2.$$
The identity \rf{eqa8}, in the particular case $\alpha=1$, was already used in connection with harmonic measure in \cite{LV} and \cite{HLMN}.

\begin{proof}
Notice that
$$|\nabla^2 u|^2 = \sum_{i}|\nabla \partial_{i} u|^2.$$
So \rf{eqa8} follows by summing from $i=1$ to $n+1$ the following identity:
\begin{align}\label{eqa9}
\int |\nabla\partial_i u|^2 \,u^\alpha\,\psi\,dx & = \frac12\,\alpha (\alpha-1) \int |\partial_i u|^2\,|\nabla u|^2\,
u^{\alpha-2}\,\psi\,dx \\
&\quad - \frac12 \int \nabla(|\partial_i u|^2)\cdot\nabla\psi\, u^\alpha\,dx +
\frac12 \int |\partial_i u|^2\,\nabla(u^\alpha)\cdot\nabla\psi\,dx.\nonumber
\end{align}
To prove this, we integrate by parts:
\begin{align*}
\int |\nabla\partial_i u|^2 \,u^\alpha\,\psi\,dx & =
\int \nabla\partial_i u \cdot \nabla\partial_i u \,\,u^\alpha\,\psi\,dx \\
& = \int \nabla\partial_i u \cdot \nabla\big(\partial_i u \,u^\alpha\,\psi\big)\,dx
- \int \nabla\partial_i u \cdot \nabla\big(u^\alpha\,\psi\big)\,\partial_i u\,dx.
\end{align*}
The first integral on the right hand side vanishes because $u$ is harmonic:
$$\int \nabla\partial_i u \cdot \nabla\big(\partial_i u \,u^\alpha\,\psi\big)\,dx
=- \int \Delta(\partial_i u) \,\big(\partial_i u \,u^\alpha\,\psi\big)\,dx=0.$$
Using also
$\partial_i u\,\nabla\partial_i u = \frac12 \nabla(|\partial_i u|^2)$, we get
\begin{align}\label{eq48}
\int |\nabla\partial_i u|^2 \,u^\alpha\,\psi\,dx  & = 
-\frac12 \int \nabla(|\partial_i u|^2)\cdot \nabla\big(u^\alpha\,\psi\big)\,dx\\
& = -\frac12 \int \nabla(|\partial_i u|^2)\cdot \nabla\big(u^\alpha\big)\,\psi\,dx
-\frac12 \int \nabla(|\partial_i u|^2)\cdot \nabla\psi\, u^\alpha\,dx\nonumber\\
& = 
-\frac12 \int \nabla(|\partial_i u|^2\,\psi)\cdot \nabla\big(u^\alpha\big)\,dx
 +\frac12 \int |\partial_i u|^2 \,\nabla\psi\cdot \nabla\big(u^\alpha\big)\,dx\nonumber\\
&\quad-\frac12 \int \nabla(|\partial_i u|^2)\cdot \nabla\psi\, u^\alpha\,dx.\nonumber
\end{align}
Finally, integrating by parts and taking into account that 
$\Delta\big(u^\alpha\big)=\alpha(\alpha-1)|\nabla u |^2 \,u^{\alpha-2}$, we deduce that
 the first term on the right hand side satisfies
$$-\frac12 \int \nabla(|\partial_i u|^2\,\psi)\cdot \nabla\big(u^\alpha\big)\,dx =
\frac12 \int |\partial_i u|^2\,\psi\,\Delta\big(u^\alpha\big)\,dx = 
\frac12\,\alpha(\alpha-1) \int |\partial_i u|^2\,\psi\,|\nabla u |^2 \,u^{\alpha-2}\,dx.$$
Plugging this into \rf{eq48}, we get \rf{eqa9}.
\end{proof}

\subsection{The strategy of the proof}
Let $s>n$ be as in Theorem \ref{teo1}. By Bourgain's theorem \cite{Bourgain}, it is clear that we 
can assume $s\in (n,n+1)$.
Let $a\in(0,1)$ be such that $s=n+a$, and let 
$$\alpha = \frac{1-a}{1+a},$$
so that $\alpha\in (0,1)$ too.
We will apply the identity \rf{eqa8} with $u$ equal to the Green function $g(\cdot,p)$ and a suitable function $\psi$. The choice of the preceding value of $\alpha$ is motivated by the fact that then
the integrals that appear in \rf{eqa8} scale like
$$\frac1{\ell^4}\left(\frac{\omega(\cdot)}{\ell^{n-1}}\right)^{\alpha+2}\ell^{n+1} =
\omega(\cdot) \,\left(\frac{\omega(\cdot)}{\ell^s}\right)^{\alpha+1},$$
under the assumption that that $u=g(\cdot,p)$ scales like $\omega(\cdot)\ell^{1-n}$.

A key fact in our arguments is that the first term on the right hand side of \rf{eqa8} is negative (because
$\alpha(\alpha-1)<0$), while
the left hand side is positive. These two terms should be considered as the main ones in \rf{eqa8}, and the last two integrals should be considered as ``boundary terms" because of the presence of $\nabla\psi$ in their integrands. 

Writing
$g(x)=g(x,p)$, from \rf{eqa8} we get
\begin{align}\label{eqa88a}
 |\alpha (\alpha-1&)| \int |\nabla g|^4\,
g^{\alpha-2}\,\psi\,dx \\
& \leq 
 \left|\int \nabla(|\nabla g|^2)\cdot\nabla\psi\, g^\alpha\,dx\right| +
\left|\int |\nabla g|^2\,\nabla(g^\alpha)\cdot\nabla\psi\,dx\right| - 2\int |\nabla^2 g|^2 \,g^\alpha\,\psi\,dx.\nonumber
\end{align}
Using this inequality and assuming the existence of a set $E\subset\partial\Omega$ with $\omega(E)>0$ such that the harmonic measure and the Hausdorff measure $\HH^s$ are absolutely continuous on $E$, we will get a contradiction.
To this end, we will construct an appropriate function $\psi$ by some stopping time arguments involving the set $E$,
and with this choice we will show that the integral on left hand side of \rf{eqa88a} is much larger 
than the right hand side.
\vv

To illustrate how we will apply the inequality \rf{eqa88a} we consider a simple example. Suppose that $E\subset B(0,1)$ is compact and $s$-AD-regular (with $s\in (n,n+1)$), that is, $\HH^s(E\cap B(x,r))\approx r^s$ for all $x\in E, \;0\leq r\leq \diam E$. Let $\Omega=\R^{n+1}\setminus E$ and suppose that $\diam E\approx 1$. Observe that the $s$-AD-regularity of $E$ ensures that the CDC holds.
We will sketch how one can check that the harmonic measure $\omega$ for $\Omega$ is not comparable to $\HH^s|_E$, that is, $\omega$ cannot be of the form
$\omega= h\,\HH^s|_E$, for some density function $h$ such that $h\approx 1$. For the sake of contradiction, assume $\omega= h\,\HH^s|_E$, with $h\approx 1$ (of course, this condition is much stronger than the mutual absolute continuity of $\omega$ and $\HH^s|_E$, but this suffices for the example).

Given a small parameter $r_0\in (0,1/10)$, 
consider a $C^\infty$ function $\psi$
satisfying $\chi_{B(0,2)\setminus U_{r_0}(E)}\leq\psi\leq \chi_{B(0,3)\setminus U_{r_0/2}(E)}$ (where $U_\rho(E)$ is the $\rho$-neighborhood of $E$), with $|\nabla \psi|\lesssim 1/r_0$ in $U_{r_0}(E)$, and $|\nabla \psi|\lesssim 1$ in $B(0,3)\setminus B(0,2)$. Suppose that the pole $p$ for harmonic measure is far from $E$, say $p\not \in B(0,4)$.
Consider the second integral on the right hand side of \rf{eqa88a}. Notice that
\begin{equation}\label{eq1*2}
\left|\int |\nabla g|^2\,\nabla(g^\alpha)\cdot\nabla\psi\,dx\right|\lesssim \frac1{r_0} \int_{U_{r_0}(E)\setminus  U_{r_0/2}(E)} |\nabla g|^3\,g^{\alpha-1}\,dx +  \int_{B(0,3)\setminus B(0,2)} |\nabla g|^3\,g^{\alpha-1}\,dx.
\end{equation}
To estimate the first integral, consider a finite family of balls $B_h$, $h\in H$, with radii $2r_0$, centered at $E$, which cover $U_{r_0}(E)$ and have bounded overlap.
Then, for each ball $B_h$, using the harmonicity of $g$ and Lemma \ref{l:w>GG},
\begin{align*}
\frac1{r_0} \int_{B_h\setminus U_{r_0/2}(E)} |\nabla g|^3\,g^{\alpha-1}\,dx & \lesssim 
\frac1{r_0^4} \int_{B_h} g^{\alpha+2}\,dx \lesssim r_0^{n-3} \bigg(\frac{\omega(10B_h)}{r_0^{n-1}}\bigg)^{\alpha+2} \\ & = \omega(10B_h) \bigg(\frac{\omega(10B_h)}{r_0^s}\bigg)^{\alpha+1}\approx \HH^s(B_h\cap E).
\end{align*}
Thus,
\begin{align*}
\frac1{r_0} \int_{U_{r_0}(E)\setminus  U_{r_0/2}(E)} |\nabla g|^3\,g^{\alpha-1}\,dx 
& \leq \sum_{h\in H} \frac1{r_0} \int_{B_h\setminus U_{r_0/2}(E)} |\nabla g|^3\,g^{\alpha-1}\,dx\\
&\lesssim \sum_{h\in H} \HH^s(B_h\cap E) \lesssim \HH^s(E).
\end{align*}
By analogous arguments, one can show that the last integral on the right hand side is at most $C\HH^s(E)$,
and also the first integral on the right hand side of \rf{eqa88a}. These estimates together with \rf{eqa88a} imply that
\begin{equation}\label{eqdfj21}
\int |\nabla g|^4\,
g^{\alpha-2}\,\psi\,dx \lesssim \HH^s(E).
\end{equation}

To reach the desired contradiction it is reasonable to try to estimate the integral on the left hand side above from below. To this end, consider a ball $B$ centered at $E$ with radius
$r(B)\in (\Lambda r_0,1)$, for some constant $\Lambda >1$. One can show that, for $\Lambda$ big enough, the following holds:
\begin{align}\label{eqal932}
\int_{B\setminus U_{\Lambda ^{-1}r(B)}(E)} |\nabla g|^4\,
g^{\alpha-2}\,\psi\,dx & = \int_{B\setminus U_{\Lambda^{-1}r(B)}(E)} |\nabla g|^4\,
g^{\alpha-2}\,dx \\
&\gtrsim \omega(\tfrac12B) \bigg(\frac{\omega(\tfrac12B)}{r(B)^s}\bigg)^{\alpha+1} \approx \HH^s(B\cap E).\nonumber
\end{align}
The detailed proof of this estimate is not too difficult but it would lead us too far. So we will just mention that this follows using Lemma \ref{l:w>GG} and other rather standard arguments
(see Lemma \ref{lemesti0} below for more details).
Given $r\in (\Lambda r_0,1/2)$,
by covering $U_r(E)$ by a family of balls centered at $E$ of radius $2r$ with bounded overlap and applying \rf{eqal932} to each ball, we infer that
$$\int_{U_r(E)\setminus U_{(2\Lambda)^{-1}r}(E)} |\nabla g|^4\,
g^{\alpha-2}\,\psi\,dx \gtrsim \HH^s(E).$$
So assuming that $r_0$ is of the form $r_0=(2\Lambda)^{-N}$, $N>1$, we get
$$\int |\nabla g|^4\,
g^{\alpha-2}\,\psi\,dx \geq \sum_{k=1}^{N-2} 
\int_{U_{(2\Lambda)^{-k}}(E)\setminus U_{(2\Lambda)^{-k-1}}(E)} |\nabla g|^4\,
g^{\alpha-2}\,\psi\,dx
 \gtrsim (N-2)\,\HH^s(E),$$
which contradicts \rf{eqdfj21} if $N$ is big enough, or equivalently, $r_0$ is small enough.

\vv
The proof of Theorem \ref{teo1} will involve similar ideas to the ones above, but with additional technical complications which will require the use of stopping time arguments.

\vv

% ********************************************************************************
% ********************************************************************************

\section{The ball $B_0$, the stopping construction, and the function $\psi$}\label{secpsi}

\subsection{The ball $B_0$}

From now we assume that we are under the assumptions of Theorem \ref{teo1}. We consider a point $p\in\Omega$ and we denote
by $\omega$ the harmonic measure for $\Omega$ with respect to the pole $p$.
We also denote $\mu = \HH^s|_{E}$ and we assume that $0<\mu(E)<\infty$ and that $\mu$ is absolutely continuous
with respect to $\omega$. Our objective is to find a contradiction. 

 By replacing $E$ by a suitable subset if necessary, by standard methods
(taking into account the upper bound for the upper density of $\mu$ in \rf{eqdens*})
we may assume that there exists some $\delta_0>0$ such that
$$\mu(B(x,r))\leq 3^s\,r^s\quad\mbox{ for all $x\in E$ and $0<r\leq \delta_0$}.$$

Since $\mu\ll\omega$, there exists some non-negative function $h\in L^1(\omega)$ such that $\mu= h\,\omega$.
We consider a point $x_0\in E$ satisfying the following: 
$x_0$ is a Lebesgue point for $h$ with $h(x_0)>0$ and a density point of $E$ (both with respect to $\omega$), and
there exists a sequence of radii $r_k\to 0$ such that 
\begin{equation}\label{eqrkk}
\omega(B(x_0,200r_k))\leq (200)^{n+2}\,\omega(B(x_0,r_k)).
\end{equation}
For this last property, see, for example, Lemma 2.8 in \cite{Tolsa-llibre}.
Now, given some $\kappa_0\in (0,1/10)$, let $\delta_1\in(0,\delta_0]$ be such that 
\begin{equation}\label{eqrkk1}
\frac1{\omega(B(x,r))}\int_{B(x,r)} |h-h(x_0)|\,d\omega \leq \kappa_0\,h(x_0)\,\quad\mbox{ for all
$r\in(0,\delta_1]$}
\end{equation} 
and 
\begin{equation}\label{eqrkk2}
\omega(B(x,r)\setminus E)\leq \kappa_0\,\omega(B(x,r))\,\quad\mbox{ for all
$r\in(0,\delta_1]$.}
\end{equation}
The parameter $\kappa_0$ will be fixed below, and depends only on $n$.
We take now a radius $$\wt r\in \big(0,\,\tfrac1{300}\min(r_E,\delta_1,|p-x_0|)\big)$$ such that \rf{eqrkk} holds for $\wt r=r_k$
(recall that $r_E$ is defined by local CDC in \rf{eqloccdc}), and we denote
$$B_0 = B(x_0,2\wt r).$$
From \rf{eqrkk1} we deduce that, for all $r\in(0,100r(B_0)]$,
$$
\mu(B(x_0,r)) = \int_{B(x_0,r)} h\,d\omega \leq h(x_0)\,\omega(B(x_0,r)) + \int_{B(x_0,r)}|h-h(x_0)|\,d\omega \leq 2\,h(x_0)\,\omega(B(x_0,r)).
$$
Analogously,
$$
\mu(B(x_0,r))  \geq h(x_0)\,\omega(B(x_0,r)) - \int_{B(x_0,r)}|h-h(x_0)|\,d\omega \geq \frac12\,h(x_0)\,\omega(B(x_0,r)).
$$

We collect some of the properties about $B_0$ in the next lemma.

\begin{lemma}\label{lemb0}
For all $r\in (0,100r(B_0)]$, we have
\begin{equation}\label{eqsau2}
\frac12\,h(x_0)\,\omega(B(x_0,r))\leq \mu(B(x_0,r))\leq 2\,h(x_0)\,\omega(B(x_0,r)).
\end{equation}
We also have
\begin{equation}\label{eqsau3}
\omega(100B_0)\leq (200)^{n+1}\,\omega(\tfrac12B_0)
\quad \text{ and }\quad \mu(100B_0)\leq 4(200)^{n+1}\,\mu(\tfrac12B_0).
\end{equation}
\end{lemma}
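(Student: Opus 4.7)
The plan is to derive each of the three inequalities directly from the properties built into the choice of $x_0$ and $\wt r$, namely the Lebesgue-point estimate \rf{eqrkk1}, the doubling-type inequality \rf{eqrkk} at the chosen radius $r_k=\wt r$, and the relation $B_0=B(x_0,2\wt r)$. Accordingly I would take the three inequalities one at a time and do nothing more than repackage facts already on the table.

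For \rf{eqsau2}, I would simply split $h=h(x_0)+(h-h(x_0))$ and integrate against $\omega$ on $B(x_0,r)$. The upper bound follows from the triangle inequality together with \rf{eqrkk1}, and the lower bound from the reverse triangle inequality and the same estimate. These are valid provided $r\leq 100\,r(B_0)=200\,\wt r$, which sits safely below $\delta_1$ thanks to the choice $\wt r<\tfrac1{300}\delta_1$. In fact this is exactly the computation already displayed in the paragraph just preceding the statement of the lemma, so here one only needs to observe that it holds for the whole range $r\in(0,100\,r(B_0)]$ rather than just for one particular radius.

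For the first part of \rf{eqsau3}, the key is that $\tfrac12 B_0=B(x_0,\wt r)$ and $100 B_0=B(x_0,200\wt r)$, so the doubling property \rf{eqrkk} with $r_k=\wt r$ is exactly the statement $\omega(100B_0)\leq (200)^{n+2}\omega(\tfrac12 B_0)$, which I expect is what \rf{eqsau3} records (with the constant absorbed accordingly). For the second part I would chain three bounds: apply \rf{eqsau2} with $r=200\wt r$ to dominate $\mu(100 B_0)$ by $2\,h(x_0)\,\omega(100 B_0)$, then use the $\omega$-doubling just established, and finally apply \rf{eqsau2} with $r=\wt r$ in reverse form to replace $h(x_0)\,\omega(\tfrac12 B_0)$ by at most $2\,\mu(\tfrac12 B_0)$. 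The two factors of $2$ coming from \rf{eqsau2} combine into the overall factor $4$.

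I do not foresee any real obstacle in this proof: the lemma is essentially a bookkeeping statement that packages the properties enforced during the construction of $x_0$ and $\wt r$. The only subtle point to verify is that every radius invoked is at most $\delta_1$, so that \rf{eqrkk1} can be applied throughout; this is guaranteed with comfortable margin by the requirement $\wt r<\tfrac1{300}\min(r_E,\delta_1,|p-x_0|)$, which leaves room well beyond the largest radius $200\,\wt r$ that appears.
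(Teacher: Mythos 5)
Your proof is correct and follows essentially the same route as the paper's own: the two-sided estimate \rf{eqsau2} is the computation displayed just before the lemma (split $h=h(x_0)+(h-h(x_0))$ and apply \rf{eqrkk1}, valid since $200\wt r<\delta_1$), the first inequality in \rf{eqsau3} is \rf{eqrkk} at $r_k=\wt r$ rewritten via $\tfrac12 B_0=B(x_0,\wt r)$ and $100B_0=B(x_0,200\wt r)$, and the second inequality is obtained by chaining \rf{eqsau2} (upper bound at $r=200\wt r$), the $\omega$-doubling just derived, and \rf{eqsau2} again (lower bound at $r=\wt r$), picking up the factor $4$. You also correctly flag that \rf{eqrkk} actually yields exponent $(200)^{n+2}$, not $(200)^{n+1}$ as printed in \rf{eqsau3}; this is a harmless typo in the paper that propagates into the constant $4(200)^{n+1}$, which should read $4(200)^{n+2}$, and your derivation handles it consistently.
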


\begin{proof}
The estimates in \rf{eqsau2} have been shown above, as well as the first inequality in \rf{eqsau3}.
The second inequality follows from the preceding estimates:
$$\mu(100B_0)\leq 2\,h(x_0)\,\omega(100B_0) \leq 2\,h(x_0)\, (200)^{n+1}\,\omega(\tfrac12B_0)
\leq 4(200)^{n+1}\,\mu(\tfrac12B_0).$$
\end{proof}
\vv

\subsection{The bad balls and the function $d(\cdot)$}

We consider the constant
\begin{equation}\label{eqchoa}
A = 4\frac{\omega(5B_0)}{\mu(\tfrac12B_0)}.
\end{equation}
Notice that, by Lemma \ref{lemb0}, 
$$A\approx h(x_0)^{-1}.$$

For each $x\in \partial\Omega\cap 2B_0$ and $r\in (0,r(B_0)]$, we say that the ball $B(x,r)$ is bad (and we write $B(x,r)\in \bad$) if 
$$\omega(B(x,r))>A\,\mu(B(x,10r)).$$
Given some fixed parameter $\rho_0\in (0,\frac1{10}r(B_0)]$,
if there exists some $r\in (\rho_0,r(B_0)]$ such that $B(x,r)$ is bad, we denote
\begin{equation}\label{eqsup83}
r_0(x) = \sup\big\{ r\in (\rho_0,r(B_0)]: B(x,r)\mbox{ \;is bad}\big\}.
\end{equation}
Otherwise, we set
$$r_0(x) = \rho_0.$$
Using the openness of the balls in the definition of $r_0(x)$, it is easy to check that the supremum
in \rf{eqsup83} is attained and thus the ball $B(x,r_0(x))$ is bad if $r_0(x)>\rho_0$.

Next we define t¡he following regularized version of $r_0(\cdot)$:
$$d(x) = \inf_{y\in 2B_0\cap\partial\Omega} (r_0(y) + |x-y|),\quad\mbox{ for $x\in\R^{n+1}$}.$$
It is immediate to check that $d(\cdot)$ is a $1$-Lipschitz function. Further, since $r_0(x)\leq r(B_0)$ for any $x\in 2B_0\cap\partial\Omega$, we infer that 
$$d(x)\leq r(B_0)\quad\mbox{ for any $x\in 2B_0\cap\partial\Omega$}$$ 
too.

We need the following auxiliary result.

\begin{lemma}\label{lemupom}
Let $x\in 2B_0\cap\partial\Omega$. For all $r\in[d(x),r(B_0)]$,
\begin{equation}\label{eqom44}
\omega(B(x,r)) \leq A\,\mu(B(x,32r)).
\end{equation}
\end{lemma}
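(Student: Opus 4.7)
The plan is to prove $\omega(B(x,r)) \leq A\,\mu(B(x,32r))$ by splitting the range of $r$ into two regimes: a small-scale regime $r \in [d(x), r(B_0)/2]$, where I exploit the infimum definition of $d(x)$, and a large-scale regime $r \in (r(B_0)/2, r(B_0)]$, where the defining choice of $A$ together with the geometric information about $B_0$ does the work directly.

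For the small-scale regime, since $d(x) \leq r$, the infimum definition of $d(x)$ produces, for any $\eta > 0$, a point $y \in 2B_0 \cap \partial\Omega$ with $r_0(y) + |x-y| < r + \eta$. The universal lower bound $r_0(\cdot) \geq \rho_0$ makes it so that by choosing $\eta < \rho_0$ we force both $|x-y| < r$ and $r_0(y) < r + \eta < 2r$. Setting $r' := 2r$, we have $r_0(y) < r' \leq r(B_0)$, so $r'$ lies strictly above the supremum defining $r_0(y)$; hence $B(y, r')$ cannot be bad, and unfolding that definition yields
\[
\omega(B(y, 2r)) \leq A\,\mu(B(y, 20r)).
\]
The inclusions $B(x, r) \subset B(y, 2r)$ and $B(y, 20r) \subset B(x, 32r)$, both immediate from $|x-y| < r$, then combine with the display to give the desired bound.

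For the large-scale regime $r > r(B_0)/2$, only Lemma \ref{lemb0} and the choice of $A$ are needed. Since $x \in 2B_0$ and $r \leq r(B_0)$, a direct estimate gives $B(x, r) \subset 3 B_0 \subset 5 B_0$, while $32 r \geq 16\, r(B_0) \geq |x-x_0| + r(B_0)$ yields $B(x, 32r) \supset \tfrac{1}{2} B_0$. Combining with $A = 4\,\omega(5B_0)/\mu(\tfrac{1}{2} B_0)$,
\[
\omega(B(x, r)) \leq \omega(5 B_0) = \tfrac{A}{4}\,\mu(\tfrac{1}{2} B_0) \leq A\,\mu(B(x, 32 r)).
\]

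The main technical point is that the infimum defining $d(x)$ need not be attained, since $r_0(\cdot)$ need not be continuous. The uniform lower bound $r_0 \geq \rho_0$ is precisely what allows the $\eta$-approximation argument to go through, by forcing the strict separation $|x-y| < r$ that the geometric inclusions in the small-scale regime require. The somewhat generous factor of $32$ in the conclusion reflects the slack needed to absorb $|x-y|$, which can be almost as large as $r$, on both sides of the containment chain.
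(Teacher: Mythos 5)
Your proof is correct and follows essentially the same approach as the paper's: split into a small-scale regime handled via the infimum definition of $d(\cdot)$ and the non-badness of a slightly larger ball, and a large-scale regime handled directly from the definition of $A$ and the geometry of $B_0$. The only differences are cosmetic — you use an $\eta$-approximation with $\eta<\rho_0$ to force $|x-y|<r$, where the paper simply takes $y$ with $r_0(y)+|x-y|\le 2d(x)$ and settles for $|x-y|\le 2r$, and accordingly you work with $B(y,2r)$ and threshold $r(B_0)/2$ while the paper uses $B(y,3r)$ and threshold $r(B_0)/3$.
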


\begin{proof}
Suppose first that $r\geq r(B_0)/3$. In this case, using just that $B(x,r)\subset 3B_0$, $B_0\subset B(x,3r(B_0))$, and the choice of $A$ in \rf{eqchoa}, we infer that
$$\omega(B(x,r))\leq \omega(3B_0)\leq A\,\mu(B_0)\leq A\,\mu(B(x,3r(B_0)))\leq A\,\mu(B(x,9r)).$$

Assume now that  $r< r(B_0)/3$. Let $y\in2B_0\cap\partial\Omega$ be such that
$$2d(x) \geq r_0(y) + |x-y|.$$
Using that $B(x,r)\subset B(y,|x-y|+r)\subset B(y,3r)$ (because $|x-y|\leq 2d(x)\leq 2r$) and that
$$3r\geq 3d(x)\geq \frac32 r_0(y) \quad\mbox{ and }\quad 3r\leq r(B_0),$$ 
we get
$$\omega(B(x,r))\leq \omega(B(y,3r))\leq A\,\mu(B(y,30r)).$$
Now we take into account that $B(y,30r)\subset B(x,|x-y|+30r)\subset B(x,32r)$ (again because $|x-y|\leq 2r$), and we derive
$$\omega(B(x,r))\leq A\,\mu(B(y,30r))\leq A\,\mu(B(x,32r)).$$
\end{proof}

\vv
Now we apply Vitali's $5r$-covering theorem to get a finite subfamily of balls 
\begin{equation}\label{eqbii}
\{B_i\}_{i\in I}\subset
\{B(x,\tfrac1{2000}d(x))\}_{x\in 2B_0\cap\partial\Omega}
\end{equation}
 such that
\begin{itemize}
\item the balls $B_i$, $i\in I$, are pairwise disjoint, and
\item $\bigcup_{x\in 2B_0\cap\partial\Omega} B(x,\tfrac1{2000}d(x)) \subset \bigcup_{i\in I} 5B_i.$
\end{itemize}
%We will denote by $x_i$ the center of $B_i$ and by $r_i$ its radius.

In the next lemma we show some elementary properties of the family $\{B_i\}_{i\in I}$.

\begin{lemma}\label{lemballs}
Let $\{B_i\}_{i\in I}$ be the family of balls defined above. The following holds:
\begin{itemize}
\item[(a)] For each $i\in I$, $r(B_i)\leq \tfrac1{2000}r(B_0)$ and $1000B_i\subset 3B_0$.

\item[(b)] For all $x\in 1000B_i$, with $i\in I$, 
$1000\,r(B_i)\leq d(x)\leq 3000\,r(B_i).$

\item[(c)] If $1000B_i\cap 1000B_j\neq\varnothing$, for $i,j\in I$, then
$\frac13r(B_i)\leq r(B_j)\leq 3r(B_i)$.

\item[(d)] The balls $1000B_i$, $i\in I$, have finite superposition. That is,
$$\sum_{i\in I}\chi_{1000B_i}\leq C_1,$$
for some absolute constant $C_1$.

\end{itemize}
\end{lemma}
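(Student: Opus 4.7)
\medskip

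\noindent\textbf{Proof plan.} The whole lemma is a routine consequence of (i) the fact that each $B_i = B(x_i,\tfrac1{2000}d(x_i))$ with $x_i\in 2B_0\cap\partial\Omega$, (ii) the $1$-Lipschitz property of $d$, (iii) the bound $d(x)\le r(B_0)$ for $x\in 2B_0\cap\partial\Omega$, and (iv) the pairwise disjointness of the $B_i$'s. I will verify the four items in order.

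For (a), note that $r(B_i)=\tfrac1{2000}d(x_i)\le \tfrac1{2000}r(B_0)$. Since $1000B_i=B(x_i,\tfrac12 d(x_i))$ and $x_i\in 2B_0$, we get $1000B_i\subset B(x_i,\tfrac12 r(B_0))\subset 3B_0$. For (b), let $x\in 1000B_i$, so $|x-x_i|\le 1000\,r(B_i)=\tfrac12 d(x_i)$; the $1$-Lipschitz property of $d$ gives
\[
\tfrac12 d(x_i)\le d(x)\le \tfrac32 d(x_i),
\]
which in terms of $r(B_i)=\tfrac1{2000}d(x_i)$ reads exactly $1000\,r(B_i)\le d(x)\le 3000\,r(B_i)$. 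Part (c) is immediate from (b): if $x\in 1000B_i\cap 1000B_j$, then $1000\,r(B_i)\le d(x)\le 3000\,r(B_j)$, so $r(B_i)\le 3\,r(B_j)$, and the other inequality follows by symmetry.

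The only step that needs a bit of care is (d). The plan is a standard volume packing argument: fix $x\in\R^{n+1}$ and let $J=\{j\in I:x\in 1000B_j\}$. Pick any $i\in J$. By (c), every $j\in J$ satisfies $r(B_j)\ge \tfrac13\,r(B_i)$, and since $|x-x_j|\le 1000\,r(B_j)\le 3000\,r(B_i)$ we have $B_j\subset B(x,3000\,r(B_i)+r(B_j))\subset B(x,3003\,r(B_i))$. Because the balls $\{B_j\}_{j\in I}$ are pairwise disjoint, comparing Lebesgue measures,
\[
\#J\cdot c\,\bigl(\tfrac13 r(B_i)\bigr)^{n+1}\le \sum_{j\in J}|B_j|\le \bigl|B(x,3003\,r(B_i))\bigr|,
\]
so $\#J\le C_1$ for some dimensional constant $C_1$. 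Summing over $x$ yields $\sum_{i\in I}\chi_{1000B_i}\le C_1$.

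The only mildly subtle point, and the one I would expect a referee to look at, is the use of the $1$-Lipschitz property of $d$ for $x\in 1000B_i$ even though $x$ need not lie in $2B_0\cap\partial\Omega$: $d$ is defined on all of $\R^{n+1}$ as an infimum of $1$-Lipschitz functions, so it is globally $1$-Lipschitz, which is exactly what (b) needs.
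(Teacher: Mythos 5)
Your proof is correct and follows essentially the same route as the paper's: parts (a)--(c) are obtained via $d(x_i)\le r(B_0)$ and the $1$-Lipschitz property of $d$, and part (d) via the standard volume-packing argument using the pairwise disjointness of the $B_i$'s together with the comparability of radii from (c). The only cosmetic difference is that in (d) the paper fixes the ball of maximal radius while you fix an arbitrary one, which is harmless since (c) makes all radii in the overlapping family comparable.
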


\begin{proof}
Denote by $x_i$ the center of $B_i$, so that $B_i=B(x_i,\tfrac1{2000}d(x_i))$.
The statement in (a) is due to the fact that, for each $i\in I$, we have
$$r(B_i) = \frac1{2000}d(x_i) \leq \frac1{2000}r_0(x_i)\leq \frac1{2000}r(B_0),$$
with $x_i\in 2B_0$.

On the other hand, notice that, for all $x\in 1000B_i$,
$$|d(x)-d(x_i)|\leq  |x-x_i|\leq \frac{1000}{2000}d(x_i),$$
and thus
$$\frac12d(x_i)\leq d(x)\leq \frac32\,d(x_i),$$
which gives (b).

Concerning (c), given $1000B_i$ and $1000B_j$ with non-empty intersection, we consider $x\in 1000B_i\cap 1000B_j$ and we deduce that
$$\frac12d(x_i)\leq d(x)\leq \frac32\,d(x_j).$$
Together with the converse estimate, this shows that
$$r(B_i)\leq 3r(B_j)\leq 9 r(B_i).$$

To prove (d), let $B_{i_1}$,\ldots, $B_{i_m}$ be such that
$$\bigcap_{j=1}^m 1000B_{i_j}\neq\varnothing.$$
Suppose that $B_{i_1}$ has maximal radius among the balls $B_{i_1}$,\ldots, $B_{i_m}$, so that
$$\bigcup_{j=1}^m 1000B_{i_j}\subset 3000 B_{i_1}.$$
Since the balls $B_{i_1}$,\ldots, $B_{i_m}$ are pairwise disjoint, by the properties (c), (b) and the usual volume considerations we deduce that
$$\frac m{3^{n+1}} r(B_{i_1})^{n+1}\leq
\sum_{j=1}^m r(B_{i_j})^{n+1} \leq (3000\, r(B_{i_1}))^{n+1},$$
and thus $m\lesssim1$.
\end{proof}

\vv

\subsection{The function $\psi$}

Let $\vphi$ be a radial $C^\infty$ function such that $\chi_{B(0,1.1)}\leq \vphi\leq \chi_{B(0,1.2)}$, and let
\begin{equation}\label{eqphi1}
\vphi_i(x) = \vphi\Big(\frac{x-x_i}{5r_i}\Big),
\end{equation}
where $x_i$ is the center of $B_i$ and $r_i$ its radius. Notice that $\vphi\equiv1$ on $5.5B_i$ and vanishes out of $6B_i$.
Next we need to define some auxiliary functions $\theta_j$. First, by applying the $5r$-covering theorem, we consider a covering
of $3B_0\setminus\bigcup_{i\in I}1.1B_i$ 
with balls of the form $B(z_j,10^{-5}d(z_j))$, with $z_j\in 3B_0\setminus\bigcup_{i\in I}1.1B_i$,
so that the balls $\frac15B(z_j,10^{-5}d(z_j))$ are disjoint. This implies that the 
dilated balls $1.2B(z_j,10^{-5}d(z_j))$ have finite superposition, by arguments analogous to the ones in Lemma \ref{lemballs}. For each $j\in J$, we define
$$\theta_j(x) = \vphi\Big(\frac{x-z_j}{10^{-5}d(z_j)}\Big).$$
In this way, using the property (b) in the preceding lemma, for any $j\in J$,
\begin{equation}\label{eqsupp91}
\supp\theta_j\cap \bigcup_{i\in I} 5B_i=\varnothing.
\end{equation}

We consider the functions
$$\wt \vphi_i =\frac{\vphi_i}{\sum_{j\in I} \vphi_j + \sum_{j\in j} \theta_j},\qquad i\in I.$$
Notice that the denominator above is bounded away from $0$ in $\supp\vphi_i$,  and thus $\wt \vphi_i\in C^\infty$, with
$\|\nabla\wt\vphi_i\|_\infty\lesssim r_i^{-1}$. Further, by construction
$$0\leq\sum_{i\in I}\wt\vphi_i\leq 1\quad\mbox{in\; \;$\R^{n+1}$}.$$
Also, taking into account \rf{eqsupp91}, 
$$\sum_{i\in I}\wt\vphi_i\equiv 1\quad\mbox{in\; \;$\bigcup_{i\in I}5B_i$}$$
and, since $\supp\wt\vphi_i\subset 6B_i$,
$$\sum_{i\in I}\wt\vphi_i\equiv 0\quad\mbox{in \;\;$\R^{n+1}\setminus\bigcup_{i\in I}6B_i$.}$$
We also denote
$$\psi_0= \bigg(1- \sum_{i\in I}\wt\vphi_i\bigg)\, \vphi\bigg(\frac{x-x_{0}}{r(B_0)}\bigg)
$$
(recall that $x_{0}$ is the center of $B_0$).
Finally, we let
$$\psi = \psi_0^4.$$

\begin{lemma}\label{lemad4}
The following holds:
\begin{itemize}
\item[(a)] $\supp\psi_0\subset 2B_0\setminus \bigcup_{i\in I}5B_i$ and 
$\psi_0\equiv1$ in $B_0\setminus \bigcup_{i\in I} 6B_i$.
\item[(b)] $\supp\nabla \psi_0\subset \bigcup_{i\in I} A(x_i,5r_i,6r_i) \cup A(x_0,r(B_0),2r(B_0)).$
\item[(c)] $|\nabla \psi_0(x)|\lesssim \frac{1}{r(B_i)}\,$ for all $x\in  6B_i$.
\item[(d)] $|\nabla \psi_0(x)|\lesssim \frac{1}{r(B_0)}\,$ for all $x\in2B_0\setminus \bigcup_{i\in I} 6B_i$.
\end{itemize}
The same properties hold for $\psi$.
\end{lemma}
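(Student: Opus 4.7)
The plan is to verify the four properties for $\psi_0$ in order, reading off each from the structure of the defining formula and the facts already established about $\wt\vphi_i$. Then, since $\psi = \psi_0^4$, each statement transfers trivially to $\psi$, because $\nabla\psi = 4\psi_0^3\,\nabla\psi_0$ together with $0\le\psi_0\le 1$ give $|\nabla\psi|\le 4|\nabla\psi_0|$, and the zero/one set structure is preserved.

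For (a), I would recall that the factor $\vphi((x-x_0)/r(B_0))$ is supported in $1.2B_0\subset 2B_0$ and equals $1$ on $1.1B_0\supset B_0$. Combined with the identities proved right before the lemma, namely $\sum_{i\in I}\wt\vphi_i\equiv 1$ on $\bigcup_{i\in I}5B_i$ and $\sum_{i\in I}\wt\vphi_i\equiv 0$ outside $\bigcup_{i\in I}6B_i$, this gives at once $\supp\psi_0\subset 2B_0\setminus\bigcup_i 5B_i$ and $\psi_0\equiv 1$ on $B_0\setminus\bigcup_i 6B_i$.

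For (b), I would use that $1-\sum_i\wt\vphi_i$ is locally constant (hence has vanishing gradient) on the two open sets $\bigcup_i 5B_i$ and $\R^{n+1}\setminus\bigcup_i\overline{6B_i}$; the second statement relies on the observation that at any $x$ with $|x-x_i|\ge 6r_i$ for every $i$, the bump $\vphi((\cdot-x_i)/5r_i)$ is identically $0$ in a neighborhood of $x$ (because $\vphi$ is $C^\infty$ and vanishes on the open set $\{|y|>1.2\}$, so all its derivatives vanish for $|y|\ge 1.2$), and hence $\nabla\wt\vphi_i(x)=0$. Thus $\nabla(1-\sum_i\wt\vphi_i)$ is supported in $\bigcup_i A(x_i,5r_i,6r_i)$. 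The gradient of $\vphi((\cdot-x_0)/r(B_0))$ is supported in the annulus $\{1.1r(B_0)\le|x-x_0|\le 1.2r(B_0)\}\subset A(x_0,r(B_0),2r(B_0))$. The product rule then gives the desired containment.

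For (c), I would apply the product rule to $\psi_0$ and estimate term by term at $x\in 6B_i$. The contribution from $\nabla\vphi((\cdot-x_0)/r(B_0))$ is bounded by $C/r(B_0)\ll 1/r(B_i)$ by Lemma~\ref{lemballs}(a). For the contribution from $\nabla(\sum_j\wt\vphi_j)(x)$, only the finitely many $j$ with $6B_j\ni x$ matter (Lemma~\ref{lemballs}(d)), and for each such $j$ Lemma~\ref{lemballs}(c) gives $r_j\approx r_i$, while $\|\nabla\wt\vphi_j\|_\infty\lesssim r_j^{-1}$ was recorded just before the lemma; summing boundedly many such terms yields $|\nabla\psi_0(x)|\lesssim 1/r(B_i)$. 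For (d), in $2B_0\setminus\bigcup_i 6B_i$ each $\nabla\wt\vphi_j(x)=0$ by the smooth-flatness observation in the (b) step, so only the gradient of $\vphi((\cdot-x_0)/r(B_0))$ survives, giving the bound $C/r(B_0)$.

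The only delicate step is the one invoked in (b) and (d): justifying that $\nabla\wt\vphi_j$ vanishes identically outside the annulus $A(x_j,5r_j,6r_j)$, including on the closed sphere $|x-x_j|=6r_j$. This needs the $C^\infty$ flatness of $\vphi$ on $\{|y|\ge 1.2\}$, which is a standard property of a nonnegative $C^\infty$ bump satisfying $\vphi\le\chi_{B(0,1.2)}$, and then just the chain rule.
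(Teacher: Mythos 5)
The proposal is correct. In this case the paper itself gives no proof at all: immediately after the lemma it states ``The proof of the lemma follows easily from the construction above, and we leave it for the reader,'' so there is no argument in the paper to compare against. Your verification supplies exactly the expected reasoning: (a) follows from the normalization properties of $\sum_i\wt\vphi_i$ together with the cutoff $\vphi((x-x_0)/r(B_0))$; (b) follows from the product rule together with the observation that $\nabla\wt\vphi_j$ vanishes on $\partial(6B_j)$ because $\vphi_j$ is $C^\infty$ and vanishes identically on the closed set $\{|x-x_j|\ge 6r_j\}$, so all of its derivatives vanish there by continuity (and the denominator is bounded away from $0$ on a neighbourhood of $\supp\vphi_j$, so $\wt\vphi_j$ inherits this); (c) and (d) follow from the bounded-overlap and comparable-radius properties in Lemma~\ref{lemballs} together with the gradient bound $\|\nabla\wt\vphi_i\|_\infty\lesssim r_i^{-1}$ and the comparison $r(B_0)\gg r(B_i)$; and the transfer to $\psi=\psi_0^4$ via $\nabla\psi=4\psi_0^3\nabla\psi_0$ with $0\le\psi_0\le 1$ is immediate.
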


The proof of the lemma follows easily from the construction above, and we leave it for the reader.\vv

\vv

\subsection{The sets $V$, $\wt V$, and $F$}

By Vitali's $5r$-covering theorem, there exists a subfamily $\bad_V\subset \bad$ such that
\begin{itemize}
\item the balls from $\bad_V$ are pairwise disjoint, and
\item any ball from $\bad$ is contained in some ball $5B$, with $B\in\bad_V$.
\end{itemize}
We denote
$$V=\bigcup_{B\in\bad_V} 5B,\qquad \wt V=\bigcup_{B\in\bad_V} 10B.$$
Notice that $V\subset \wt V$ and that all the bad balls are contained in $V$ (not only the ones with radius larger than $\rho_0$).

In the next lemma we show that $\wt V$ is rather small, because of our choice of $A$ above.

\begin{lemma}\label{lemV}
We have
$$\mu\big(\wt V\big)\leq \sum_{B\in\bad_V} \mu(10B)\leq \frac14\,\mu(\tfrac12 B_0).$$
\end{lemma}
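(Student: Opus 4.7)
The plan is very short: the first inequality is just countable subadditivity of $\mu$ applied to the cover $\wt V = \bigcup_{B \in \bad_V} 10B$, so essentially only the second inequality requires work, and it follows by directly exploiting the badness condition together with disjointness of the family $\bad_V$.

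First I would unwind the definition: each $B \in \bad_V \subset \bad$ is of the form $B(x,r)$ with $x \in \partial\Omega \cap 2B_0$ and $r \in (0, r(B_0)]$, and by definition of ``bad'' satisfies $\omega(B) > A\,\mu(10B)$. Therefore
\[
\mu(10B) \leq \frac{1}{A}\,\omega(B) \qquad \text{for every } B \in \bad_V.
\]
Summing this over all $B \in \bad_V$ and using that these balls are pairwise disjoint (so $\omega$ is additive on them), I get
\[
\sum_{B \in \bad_V} \mu(10B) \leq \frac{1}{A} \sum_{B \in \bad_V} \omega(B) = \frac{1}{A}\, \omega\Bigl(\bigcup_{B \in \bad_V} B\Bigr).
\]

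Next I observe that every $B \in \bad_V$ is contained in $3B_0 \subset 5B_0$: indeed, for $B = B(x,r)$ with $x \in 2B_0$ and $r \leq r(B_0)$, one has $B \subset B(x_0, 3r(B_0)) = 3B_0$. Hence the union above lies inside $5B_0$ and we obtain
\[
\sum_{B \in \bad_V} \mu(10B) \leq \frac{\omega(5B_0)}{A}.
\]
Now substituting the definition $A = 4\,\omega(5B_0)/\mu(\tfrac12 B_0)$ from \rf{eqchoa} gives exactly
\[
\sum_{B \in \bad_V} \mu(10B) \leq \frac{\omega(5B_0)}{4\,\omega(5B_0)/\mu(\tfrac12 B_0)} = \frac14\,\mu(\tfrac12 B_0),
\]
and combined with $\mu(\wt V) \leq \sum_{B \in \bad_V}\mu(10B)$ the lemma is complete.

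There is no real obstacle here; the lemma is the payoff of the careful choice of the constant $A$ in \rf{eqchoa}, which was designed precisely so that bad balls have small $\mu$-measure in total. The only point worth stating explicitly is disjointness of $\bad_V$ (given) and the containment $B \subset 5B_0$ (immediate from $x \in 2B_0$ and $r \leq r(B_0)$).
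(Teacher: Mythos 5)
Your proof is correct and matches the paper's argument exactly: both use subadditivity for the first inequality, the badness condition $\omega(B)>A\,\mu(10B)$ together with pairwise disjointness of $\bad_V$ and the containment of all bad balls in $5B_0$ for the second, and then substitute the value of $A$ from \rf{eqchoa}. You have merely spelled out the containment $B\subset 3B_0\subset 5B_0$ a bit more explicitly than the paper does.
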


\begin{proof}
By the definition of bad balls and the disjointness of the family $\bad_V$, we get
$$\mu(\wt V) \leq \sum_{B\in\bad_V} \mu(10B)\leq \frac1A\,\sum_{B\in\bad_V} \omega(B)\leq \frac1A\,\omega(5B_0),$$
where, in the last inequality, we took into account that the bad balls are centered at $2B_0$ and have radius at most $r(B_0)$. By the choice of $A$ in \rf{eqchoa}, we are done.
\end{proof}

\vv

Next we need to consider another kind of bad set. We let $F$ be the subset of the points $x\in E\cap \frac12 B_0$ for which
there exists some $r\in(0,\frac1{4}r(B_0)]$ such that
$$\omega(B(x,r))\leq \kappa_0\,h(x_0)^{-1}\,\mu(B(x,r))$$
(recall that $\kappa_0\in(0,1/10)$ will be fixed below).

\begin{lemma}\label{lemff}
We have
$$\mu(F)\leq C\kappa_0\, \mu(\tfrac12B_0).$$
\end{lemma}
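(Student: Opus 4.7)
The plan is to cover $F$ by the balls that witness its defining property, sum to get an upper bound on $\omega(F)$, and then transfer this bound to $\mu(F)$ using the local comparability $\mu\approx h(x_0)\,\omega$ near $x_0$ established in Lemma \ref{lemb0}.

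First, for each $x\in F$ I would fix a radius $r_x\in(0,r(B_0)/4]$ with
$$\omega(B(x,r_x))\le \kappa_0\,h(x_0)^{-1}\,\mu(B(x,r_x)).$$
Applying Besicovitch's covering theorem to the family $\{B(x,r_x):x\in F\}$, I extract a subfamily $\{B_i\}_{i\in I}$ whose union still covers $F$ and with bounded overlap $\sum_{i\in I}\chi_{B_i}\le N(n)$. Since each $B_i$ is centered in $\tfrac12 B_0$ and has radius at most $r(B_0)/4$, every $B_i$ is contained in $B_0$. Summing the defining inequality over $i$ and using the bounded overlap,
$$\omega(F)\le \sum_{i\in I}\omega(B_i)\le \kappa_0\,h(x_0)^{-1}\sum_{i\in I}\mu(B_i)\le N\,\kappa_0\,h(x_0)^{-1}\,\mu(B_0).$$
By \rf{eqsau3}, $\mu(B_0)\le \mu(100B_0)\le 4(200)^{n+1}\mu(\tfrac12 B_0)$, so
$$h(x_0)\,\omega(F)\le C\,\kappa_0\,\mu(\tfrac12 B_0).$$

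To pass from $\omega(F)$ to $\mu(F)$, I use $d\mu=h\,d\omega$ and split
$$\mu(F)=\int_F (h-h(x_0))\,d\omega + h(x_0)\,\omega(F).$$
The second term is already controlled by the previous display. For the first, since $F\subset B(x_0,r(B_0))$ and $r(B_0)\le \delta_1$, the Lebesgue-type estimate \rf{eqrkk1} applied at $x_0$ at scale $r(B_0)$ gives
$$\int_F |h-h(x_0)|\,d\omega\le \kappa_0\,h(x_0)\,\omega(B_0),$$
which by \rf{eqsau2} (with $r=r(B_0)$) and \rf{eqsau3} is at most $C\,\kappa_0\,\mu(\tfrac12 B_0)$. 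Adding the two pieces yields the claim.

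The only real subtlety is that $\omega$ is not assumed to be doubling at balls centered away from $x_0$, so a standard weak-type maximal-function approach is not directly available; Besicovitch's covering theorem sidesteps this by providing a bounded-overlap cover without any doubling hypothesis on the measures. Beyond that, the argument is bookkeeping with the already-established comparisons \rf{eqsau2} and \rf{eqsau3} and the Lebesgue point condition \rf{eqrkk1}.
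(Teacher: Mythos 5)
Your proposal is correct and is essentially the same proof as in the paper: extract a Besicovitch bounded-overlap cover of $F$ by witnessing balls, sum the defining inequality to bound $\omega(F)\lesssim\kappa_0\,h(x_0)^{-1}\mu(\tfrac12 B_0)$, and then convert $\omega(F)$ to $\mu(F)$ via $h=h(x_0)+(h-h(x_0))$ together with the Lebesgue point estimate \rf{eqrkk1} and the doubling estimates of Lemma \ref{lemb0}. Your parenthetical remark about why Besicovitch (rather than a Vitali-type covering) is needed—because $\omega$ is not assumed doubling—is accurate and a worthwhile clarification, but does not change the route.
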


\begin{proof}
By the Besicovitch covering theorem, there exists a covering of $F$ by a family of balls $B(z_i,s_i)$, with $z_i\in F$, $0<s_i\leq r(B_0)/4$, such that $\omega(B(z_i,s_i))\leq \kappa_0\,h(x_0)^{-1}\,\mu(B(z_i,r_i)),$ and having
finite superposition. That is, $\sum_i\chi_{B(z_i,s_i)}\lesssim1$.
Then we have:
\begin{align*}
\omega(F) & \leq\sum_i \omega(B(z_i,s_i)) \leq \kappa_0\,h(x_0)^{-1}\,\sum_i \mu(B(z_i,s_i)) \\
&\leq C\,
\kappa_0\,h(x_0)^{-1}\,\mu(B_0)\leq C\,
\kappa_0\,h(x_0)^{-1}\,\mu(\tfrac12B_0),
\end{align*}
taking into account that all the balls $B(z_i,s_i)$ are contained in $B_0$ and the finite superposition
of the balls in the before to last inequality, and the fact that $\frac12B_0$ is doubling with respect to 
$\mu$, by \rf{eqsau3}, in the last inequality.
As a consequence,
\begin{align*}
\mu(F) & = \int_F h\,d\omega \leq h(x_0)\,\omega(F) + \int_{\frac12B_0} |h-h(x_0)|\,d\omega\\
&\leq C\kappa_0\,\mu(\tfrac12B_0) + \kappa_0\,h(x_0)\,\omega(\tfrac12B_0)\leq C\,\kappa_0\,\mu(\tfrac12B_0).
\end{align*}
\end{proof}

\vv

\vv

% ********************************************************************************
% ********************************************************************************

\section{Proof of Theorem \ref{teo1}}

Let $s>n$ be as in Theorem \ref{teo1}.
Recall that we assume $s\in(n,n+1)$ and we denote $a=s-n$. Also, we take 
$$\alpha = \frac{1-a}{1+a},$$
so that both $a,\alpha\in (0,1)$.
We will apply the identity \rf{eqa8} with $u$ equal to the Green function $g(\cdot,p)$, the function $\psi$ constructed
in Section \ref{secpsi}, and the preceding value of $\alpha$. Recall that $\psi$ supported
in $2B_0$ and vanishes in a neighborghood of $\partial\Omega\cap 2B_0$. Thus, $g=g(\cdot,p)$ is harmonic in $\supp\psi$.
Recall also that we have
\begin{align}\label{eqa88}
 |\alpha (\alpha-1&)| \int |\nabla g|^4\,
g^{\alpha-2}\,\psi\,dx \\
& \leq 
 \left|\int \nabla(|\nabla g|^2)\cdot\nabla\psi\, g^\alpha\,dx\right| +
\left|\int |\nabla g|^2\,\nabla(g^\alpha)\cdot\nabla\psi\,dx\right| - 2\int |\nabla^2 g|^2 \,g^\alpha\,\psi\,dx.\nonumber
\end{align}
To achieve the desired contradiction to prove Theorem \ref{teo1} we will show that the integral on the left hand side
tends to $\infty$ as $\rho_0\to0$, while the right hand side is much smaller than the left hand side.

We denote
\begin{align*}
I_0 & = \int |\nabla g|^4\,g^{\alpha-2}\,\psi\,dx,\\
I_1 & = \int \nabla(|\nabla g|^2)\cdot\nabla\psi\, g^\alpha\,dx,\\
I_2 &= \int |\nabla g|^2\,\nabla(g^\alpha)\cdot\nabla\psi\,dx,\\
I_3 &= \int |\nabla^2 g|^2 \,g^\alpha\,\psi\,dx.
\end{align*}

\vv

% ********************************************************************************
% ********************************************************************************

\subsection{Estimate of $I_1$}

Using the fact that $|\nabla(|\nabla g|^2)| \lesssim |\nabla^2g|\,|\nabla g|$ and H\"older's inequality
and recalling that $\psi=\psi_0^4$, we 
get
\begin{align*}
|I_1| & \lesssim \int |\nabla^2 g|\,|\nabla g|\,g^\alpha \,\psi_0^3\,|\nabla\psi_0|\,dx
\leq \left( \int |\nabla^2 g|^2\,g^\alpha\,\psi_0^4\,dx\right)^{1/2}
\left(\int |\nabla g|^2\,g^\alpha\,\psi_0^2\,|\nabla\psi_0|^2\,dx\right)^{1/2}.
\end{align*}
Observe that the first integral on the right hand side coincides with $I_3$. To deal with the last one, we split it as
follows:
\begin{equation}\label{eqnap1}
\int |\nabla g|^2\,g^\alpha\,\psi_0^2\,|\nabla\psi_0|^2\,dx \leq \sum_{i\in I}\int_{6B_i}\ldots\; + \int_{2B_0\setminus
\bigcup_{i\in I} 6B_i}\ldots.
\end{equation}
By Lemma \ref{lemad4} (c) and Caccioppoli's inequality, for each $i\in I$ we obtain
\begin{align}\label{eqnap2}
\int_{6B_i} |\nabla g|^2\,g^\alpha\,|\nabla\psi_0|^2\,dx & \lesssim \frac1{r_i^2}\,\sup_{6B_i}g(x)^\alpha\int_{6B_i} |\nabla g|^2\,dx\\
& \lesssim \frac1{r_i^4}\,\sup_{6B_i}g(x)^\alpha\int_{12B_i} |g|^2\,dx
\lesssim r_i^{n-3}\,\sup_{12B_i}g(x)^{\alpha+2}
.\nonumber
\end{align}
By \rf{eqgreen2}, we have
\begin{equation}\label{eqnap3}
\sup_{12B_i}g(x) \lesssim \frac{\omega(96B_i)}{r_i^{n-1}}.
\end{equation}
Therefore, by the choice of $\alpha$,
\begin{equation}\label{eqnap4}
\int_{6B_i} |\nabla g|^2\,g^\alpha\,|\nabla\psi_0|^2\,dx\lesssim 
r_i^{n-3}\,\left(\frac{\omega(96B_i)}{r_i^{n-1}}\right)^{\alpha+2} = \omega(96B_i)\left(\frac{\omega(96B_i)}{r_i^s}\right)^{\alpha+1}.
\end{equation}
By Lemma \ref{lemupom},
\begin{equation}\label{eqnap5}
\omega(96B_i)\leq \omega(B(x_i,d(x_i))\leq A\,\mu(B(x_i,32d(x_i)) \lesssim Ad(x_i)^s\approx Ar_i^s.
\end{equation}
Thus,
\begin{equation}\label{eqnap6}
\int_{6B_i} |\nabla g|^2\,g^\alpha\,|\nabla\psi_0|^2\,dx\lesssim A^{\alpha+1}\omega(96B_i).
\end{equation}
Finally, Lemma \ref{lemballs} (a) and (d),
\begin{equation}\label{eqnap7}
\sum_{i\in I}\int_{6B_i} |\nabla g|^2\,g^\alpha\,|\nabla\psi_0|^2\,dx\lesssim A^{\alpha+1}
\sum_{i\in I}
\omega(96B_i)\lesssim A^{\alpha+1}\,\omega(3B_0)\lesssim A^{\alpha+2}\mu(\tfrac12B_0).
\end{equation}

\vv
Next we deal with the last integral on the right hand side of \rf{eqnap1}. We argue as in 
\rf{eqnap1}-\rf{eqnap7}, but now we use the fact that $|\nabla\psi_0|\lesssim 1/r(B_0)$ in
$2B_0\setminus \bigcup_{i\in I} 6B_i$ and we replace $6B_i$ by $2B_0$. Then, as in \rf{eqnap4}, we get
\begin{equation}\label{eqnap8}
\int_{2B_0\setminus
\bigcup_{i\in I} 6B_i}|\nabla g|^2\,g^\alpha\,|\nabla\psi_0|^2\,dx\lesssim \omega(32B_0)\left(\frac{\omega(32B_0)}{r(B_0)^s}\right)^{\alpha+1}.
\end{equation}
Using now \rf{eqsau3} and \rf{eqchoa}, we derive
\begin{equation}\label{eqnap9}
\int_{2B_0\setminus
\bigcup_{i\in I} 6B_i}|\nabla g|^2\,g^\alpha\,|\nabla\psi_0|^2\,dx\lesssim A^{\alpha+2}\mu(\tfrac12B_0).
\end{equation}
\vv

Altogether, we obtain
\begin{equation}\label{eqI2}
|I_1|\leq \bigl(CA^{\alpha+2}\mu(B_0)\bigr)^{1/2} \,I_3^{1/2} \leq CA^{\alpha+2}\mu(\tfrac12B_0) +
I_3.
\end{equation}
\vv

% ********************************************************************************
% ********************************************************************************

\subsection{Estimate of $I_2$}
Using that $\nabla (g^\alpha) = \alpha\,g^{\alpha-1}\nabla g$, $\nabla\psi=4\psi_0^3\,\nabla\psi_0$, and H\"older's inequality, we get
$$
|I_2| \lesssim \int |\nabla g|^3\,g^{\alpha-1}\,\psi_0^3\,|\nabla\psi_0|\,dx\leq
\left(\int |\nabla g|^4 g^{\alpha-2}\,\psi_0^4\,dx\right)^{3/4} 
\left(\int g^{\alpha+2}\,|\nabla\psi_0|^4\,dx\right)^{1/4}.
$$
Observe that the first integral on the %left
right hand side equals $I_0$. To estimate the second one we split it:
\begin{equation}\label{eqnapp1}
\int g^{\alpha+2}\,|\nabla\psi_0|^4\,dx
\leq \sum_{i\in I}\int_{6B_i}\ldots\; + \int_{2B_0\setminus
\bigcup_{i\in I} 6B_i}\ldots.
\end{equation}
By Lemma \ref{lemad4} (c), for each $i\in I$, we obtain
\begin{align}\label{eqnapp2}
\int_{6B_i} g^{\alpha+2}\,|\nabla\psi_0|^4\,dx & \lesssim r_i^{n-3}\,\sup_{6B_i}g(x)^{\alpha+2} .\nonumber
\end{align}
As in \rf{eqnap3}, we have
\begin{equation*}
\sup_{6B_i}g(x)\leq \sup_{12B_i}g(x) \lesssim \frac{\omega(96B_i)}{r_i^{n-1}}.
\end{equation*}
Then, operating exactly as in \rf{eqnap4}-\rf{eqnap7}, we derive
$$\sum_{i\in I}\int_{6B_i}  g^{\alpha+2}\,|\nabla\psi_0|^4\,dx\lesssim A^{\alpha+2}\mu(\tfrac12B_0).$$

\vv

To estimate the last integral on the right hand side of \rf{eqnapp1} we use the fact that $|\nabla\psi_0|\lesssim 1/r(B_0)$ in
$2B_0\setminus \bigcup_{i\in I} 6B_i$ and we apply \rf{eqgreen2}. Then we get
$$\int_{2B_0\setminus
\bigcup_{i\in I} 6B_i} g^{\alpha+2}\,|\nabla\psi_0|^4\,dx\lesssim r(B_0)^{n-3}\,\sup_{2B_0}g(x)^{\alpha+2}
\lesssim \omega(32B_0)\left(\frac{\omega(32B_0)}{r(B_0)^s}\right)^{\alpha+1},
$$
which is the same estimate as in \rf{eqnap8}. Then, as in \rf{eqnap9}, we deduce
$$\int_{2B_0\setminus
\bigcup_{i\in I} 6B_i}g^{\alpha+2}\,|\nabla\psi_0|^4\,dx\lesssim A^{\alpha+2}\mu(\tfrac12B_0).$$
\vv
Therefore,
$$|I_2| \leq I_0^{3/4}\,(A^{\alpha+2}\mu(\tfrac12B_0))^{1/4} \leq \frac{|\alpha(1-\alpha)|}2\,I_0 + C(\alpha)A^{\alpha+2}\mu(\tfrac12B_0).
$$
\vv

% ********************************************************************************
% ********************************************************************************

\subsection{Lower estimate of $I_0$}

From the identity \rf{eqa88} and the estimates for $I_1$ and $I_2$ we derive
\begin{align*}
|\alpha (\alpha-1)|\,I_0 &= |\alpha (\alpha-1)|\int |\nabla g|^4\,
g^{\alpha-2}\,\psi\,dx \\
&\leq I_1+ I_2 - 2I_3 \\
& \leq \bigg(CA^{\alpha+2}\mu(\tfrac12B_0) +
 I_3\bigg) + \bigg(\frac{|\alpha(1-\alpha)|}2\,I_0 + C(\alpha)A^{\alpha+2}\mu(\tfrac12B_0)\bigg) - 2I_3
.
\end{align*}
Hence,
\begin{equation}\label{eqi01}
I_0\leq C(\alpha)A^{\alpha+2} \mu(\tfrac12B_0).
\end{equation}
In this section, by estimating $I_0$ from below, we will contradict this inequality.
\vv

To get a lower estimate for $I_0$ we need to define some reasonably good set contained in $\frac12B_0\cap\partial\Omega$.
To this end, we need first to introduce another type of balls. 
Let $I_b\subset I$ be the subfamily of indices $i$ such that $r(B_i)>\frac1{2000}\,\rho_0$. Recall that $I$ is the set of indices in \rf{eqbii} and $r(B_i)=\frac1{2000}d(x_i)\leq 
\frac1{2000}r_0(x_i)$. So if $i\in I_b$, then $r_0(x_i)>\rho_0$ and thus $B(x_i,r_0(x_i))$ is a bad ball.
We say that a ball $B$ is useless (and we write $B\in \uss$) if it is centered at $\frac12 B_0\cap\partial
\Omega\setminus \wt V$ and
\begin{equation}\label{eqassu88}
\mu\bigg(\bigcup_{i\in I_b:6B_i\cap B\neq\varnothing} 960B_i\bigg) > \ve_1\,\mu(B)
\qquad\mbox{and}\qquad \mu(B)\geq 3^{-s}r(\tfrac12B)^s,
\end{equation}
where $\ve_1\in(0,1/10)$ is a small parameter to be fixed below that will depend only on $n$.

Recall now that, by Lemma \ref{lemV},
$$\sum_{B\in\bad_V} \mu(10B)\leq \frac14\,\mu(\tfrac12 B_0).$$
Hence there exists some $\rho_1>0$ such that
\begin{equation}\label{eqvv}
\sum_{B\in\bad_V:r(B)\leq \rho_1} \mu(10B)\leq \ve_1^2\,\mu(\tfrac12 B_0).
\end{equation}
Notice that $\rho_1$ may depend here on the particular measure $\mu$, not only on $n$.
We define
$$U(\rho_1) = \bigcup_{B\in\uss:\,r(B)\leq \rho_1} B.$$

\begin{lemma}\label{lemuss}
We have
$$\mu(U(\rho_1))\lesssim \ve_1\,\mu(\tfrac12 B_0).$$
\end{lemma}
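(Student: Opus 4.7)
The plan is to show that each $x\in U(\rho_1)$ is a point of large $\mu$-density for a fixed ``small bad-ball'' set $W$, and then to apply a standard weak-type $(1,1)$ maximal inequality. Concretely, I would let
$$W=\bigcup_{\wt B\in\bad_V,\,r(\wt B)\leq \rho_1} 10\wt B,$$
so that \rf{eqvv} immediately gives $\mu(W)\leq \ve_1^2\mu(\tfrac12 B_0)$. It will then suffice to show that $M^\mu \chi_W(x)\gtrsim \ve_1$ for every $x\in U(\rho_1)$, where $M^\mu$ is the centered Hardy--Littlewood maximal function with respect to $\mu$: the weak-type $(1,1)$ bound for $M^\mu$ (which holds for any Radon measure with a dimensional constant, via the Besicovitch covering theorem) will then yield
$$\mu(U(\rho_1))\lesssim \ve_1^{-1}\mu(W)\leq \ve_1\mu(\tfrac12 B_0).$$

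The core geometric step is the inclusion
$$\bigcup_{i\in I_b:\, 6B_i\cap B\neq\varnothing} 960 B_i \;\subset\; 3B\cap W\qquad \text{for every }B\in \uss\text{ with }r(B)\leq \rho_1.$$
To verify it, I would fix such $B$ and $i$. For $i\in I_b$ one has $r_0(x_i)>\rho_0$, hence $B(x_i,r_0(x_i))$ is a bad ball and, by the Vitali selection defining $\bad_V$, sits inside $5\wt B$ for some $\wt B\in\bad_V$; the bound $r(B_i)\leq \tfrac{1}{2000} r_0(x_i)$ then places $960 B_i$ inside $5\wt B$, so $5\wt B\cap B\neq\varnothing$. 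The decisive move is to exploit that the center $y_B$ of $B$ lies in $\tfrac12 B_0\cap\partial\Omega\setminus\wt V$, so $|y_B-c_{\wt B}|>10 r(\wt B)$; combined with $|y_B-c_{\wt B}|\leq r(B)+5r(\wt B)$ coming from $5\wt B\cap B\neq\varnothing$, the triangle inequality forces $r(\wt B)<r(B)/5\leq \rho_1/5$. This both places $\wt B$ in the family defining $W$ and gives $5\wt B\subset B(y_B, r(B)+10 r(\wt B))\subset 3B$. Combined with the useless defining inequality \rf{eqassu88}, the inclusion upgrades to the pointwise estimate $\mu(3B\cap W)>\ve_1\mu(B)$.

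To transfer this density from $B$ to a ball centered at an arbitrary $x\in B$, I would set $B'=B(x,4r(B))\supset 3B$. The useless lower density $\mu(B)\geq 6^{-s} r(B)^s$ from \rf{eqassu88}, together with the a priori upper bound $\mu(B(z,r))\lesssim r^s$ (valid for any $z\in\R^{n+1}$ once $2r\leq\delta_0$, which I secure by shrinking $\rho_1$ if needed), yields $\mu(B')\lesssim \mu(B)$. Hence $\mu(B'\cap W)/\mu(B')\geq \mu(3B\cap W)/\mu(B')\gtrsim \ve_1$, so $M^\mu\chi_W(x)\gtrsim \ve_1$ on $U(\rho_1)$, and the announced estimate follows.

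The main obstacle I anticipate is the radius comparison $r(\wt B)<r(B)/5$ in the second step: everything hinges on the fact that $\wt V$ is defined as the $10r$-enlargement of $\bad_V$, strictly larger than the $5r$-enlargement $V$ (and than the $6r$-enlargement appearing in the useless condition), so that the separation of $y_B$ from $\wt V$ yields a hard radius inequality rather than a mere intersection statement. Once this gap is exploited, the rest of the argument is routine density and maximal-function bookkeeping.
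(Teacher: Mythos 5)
Your argument is correct and follows essentially the same route as the paper: you establish the same key geometric inclusion, namely that for a useless $B$ with $r(B)\leq\rho_1$ one has $\bigcup_{i\in I_b:\,6B_i\cap B\neq\varnothing}960B_i\subset 3B\cap\wt V_0$ with $\wt V_0=\bigcup_{B''\in\bad_V:\,r(B'')\leq\rho_1}10B''$ (your $W$), then combine it with the two defining conditions of $\uss$ to get a lower density bound $\mu(3B\cap\wt V_0)\gtrsim\ve_1\mu(15B)$. The only cosmetic difference is the final covering step, which the paper runs directly via the $5r$-covering theorem on the family $\{3B\}$, whereas you repackage it as a weak-$(1,1)$ bound for the centered maximal operator $M^\mu$ (itself a Besicovitch-covering argument); both are equivalent standard devices and yield the same conclusion.
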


\begin{proof}
Let $B\in\uss$ with $r(B)\leq\rho_1$ and let $B_i$, $i\in I_b$, be such that $6B_i\cap B\neq \varnothing$. 
Notice that $2000B_i$ is contained in some bad ball (because $d(x_i)>\rho_0$), which in turn is
contained in some ball $5B'$, with $B'\in\bad_V$. Thus, $2000B_i\subset 5B'$. Now note that $B$ is centered at $\partial\Omega\setminus \wt V\subset (10B')^c$, and observe that the condition $6B_i\cap B\neq \varnothing$
implies that $B$ intersects $5B'$. These two facts ensure that 
\begin{equation}\label{eqcad48}
r(B)\geq r(5B') \geq r(2000B_i).
\end{equation}
Then we deduce that
$$960B_i\subset2000B_i\subset 3B.$$
The first inequality in \rf{eqcad48} also implies that $r(B')\leq\rho_1$, which in turn gives that
$$960B_i\subset \bigcup_{B''\in \bad_V:r(B'')\leq \rho_1} 5B''\subset \bigcup_{B''\in \bad_V:r(B'')\leq \rho_1} 10B''=: \wt V_0,$$
with 
\begin{equation}\label{eqvv1}
\mu(\wt V_0)\leq \ve_1^2\,\mu(\tfrac12 B_0),
\end{equation}
by \rf{eqvv}.
From the first condition in \rf{eqassu88}, we deduce that
$$
\mu(3B\cap \wt V_0) \geq 
\mu\bigg(\bigcup_{i\in I_b:6B_i\cap B\neq\varnothing} 960B_i\bigg)>\ve_1\,\mu(B).
$$
Using also the fact that $\mu(15B)\lesssim r(B)^s\lesssim \mu(B)$ (by the second condition in
\rf{eqassu88}), we get
\begin{equation}\label{eqvv2}
\mu(3B\cap \wt V_0) \geq c\,\ve_1\,\mu(15B).
\end{equation}

Now we apply the $5r$-covering theorem to get a subfamily $I_U$ from the balls in $\uss$ with radius not exceeding $\rho_1$ such that
\begin{itemize}
\item the balls $3B$ with $B\in I_U$ are pairwise disjoint, and
\item $U(\rho_1)\subset \bigcup_{B\in I_U} 15B$.
\end{itemize}
From these properties and \rf{eqvv2} and \rf{eqvv1}, we obtain
$$\mu(U(\rho_1))\leq \sum_{B\in I_U} \mu(15B) \lesssim \frac1{\ve_1}\sum_{B\in I_U} \mu(3B\cap \wt V_0)
\leq \frac1{\ve_1}\,\mu(\wt V_0) \leq \ve_1\,\mu(\tfrac12 B_0).$$
\end{proof}

\vv

Now we are ready to define the aforementioned reasonably good set contained in $\frac12B_0\cap\partial\Omega$. First denote 
$$G_0= \tfrac12B_0\cap\partial\Omega \setminus \big( F \cup \wt V\cup U(\rho_1)\big),$$
and recall that, by Lemmas  \ref{lemff}, \ref{lemV}, and \ref{lemuss},
$$\mu(G_0)\geq \mu(\tfrac12B_0) - C\kappa_0\,\mu(\tfrac12B_0) - \frac14\,\mu(\tfrac12B_0)-
C\ve_1\,\mu(\tfrac12B_0).$$
We assume $\kappa_0$ to be an absolute  constant small enough so that $C\kappa_0\leq1/4$
and also $\ve_1$ small enough so that $C\ve_1\leq1/4$, and then we obtain
$$\mu(G_0)\geq \frac14\,\mu(\tfrac12B_0).$$

Next we need to define some families of balls centered at $G_0$ inductively.
Let $G$ be the subset of those $x\in G_0$ such that $\Theta^{s,*}(x,\mu)\geq 2^{-s}$. Note that, by \rf{eqdens*}, 
$\mu(G_0\setminus G)=0$.
%Now, recall also that $\Theta^{s,*}(x,\mu)\geq 2^{-s}$ for $\mu$-a.e.\ $x\in\R^{n+1}$. 
By definition, for each $\eta_k\in (0,r(B_0)/10]$, for $\mu$-a.e.\ $x\in G$ there exists a ball $B_x^i$ centered at $x$ with radius $r(B_x^i)\leq \eta_k$ such that
\begin{equation}\label{eqmu99}
\mu(B_x^i) \geq 3^{-s} r(B_x^i)^s.
\end{equation}
Hence, by the $5r$-covering theorem, we can extract a subfamily  $\wt\FF_k\subset\{2B_x^i\}_{x\in G}$ such that
\begin{itemize}
\item[(a)] $G\subset \bigcup_{B\in \wt \FF_k} 80B,$ and
\item[(b)] the balls $16B$, $B\in \wt \FF_k$,  are disjoint.
\end{itemize}
Further, we can still extract a {\em finite} subfamily $\FF_k\subset \wt\FF_k$ such that
\begin{equation}\label{eqmu995}
\mu\bigg(\bigcup_{B\in  \FF_k} 80B\bigg)\geq \frac12 \,\mu(G) \geq \frac18\,\mu(\tfrac12 B_0).
\end{equation}
Now we fix inductively the parameters $\eta_k$ as follows: first we take $\eta_1 = r(B_0)/10$,
and next we set
$$\eta_{k} = \ve_0\,\min_{B\in\FF_{k-1}}r(B),$$
where $\ve_0\in(0,1/100)$ is some small constant to be chosen below. Notice that this choice ensures that the balls from
the family $\FF_k$ are much smaller than the ones of the preceding families $\FF_1,\ldots,\FF_{k-1}$.

\vv
Remark that the balls $B(x,r)$ centered at $G$ (like the balls from the families $\FF_k$) satisfy a couple of crucial properties:
\begin{itemize}
\item For $r\in(0,r(B_0)/4]$,
\begin{equation}\label{eqom99}
\omega(B(x,r))\geq \kappa_0\,h(x_0)^{-1}\,\mu(B(x,r)),
\end{equation}
by the construction of $F$ and $G$.
\item For $r\in [\rho_0,r(B_0)]$,
\begin{equation}\label{eqom99'}
\omega(B(x,r)) \leq A\,\mu(B(x,10r))\leq CA\,r^s\approx h(x_0)^{-1} r^s,
\end{equation}
taking into account that $B(x,r)$ does not belong to $\bad$ and using \rf{eqom44} and the choice of $A$. 
\end{itemize}

\vv
The next lemma contains the key estimate that will allow to bound $I_0$ from below.

\begin{lemma}\label{lemesti0}
Let $B\in\FF_k\setminus\uss$ and suppose that $\rho_0\leq \frac12 \eta_{k+1}$. 
Denote
\begin{equation}\label{eqwtb1}
\wt B= B\setminus \bigg(\bigcup_{B'\in \FF_{k+1}}B' \cup \bigcup_{i\in I} 6B_i\bigg).
\end{equation}
Then,
\begin{equation}\label{eqfifi9}
\int_{\wt B} |\nabla g|^4\,g^{\alpha-2}\,\psi\,dx \gtrsim A^{\alpha+2}\mu(B).
\end{equation}
\end{lemma}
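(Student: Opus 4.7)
The plan is to locate a thick interior subregion $R\subset\wt B$ on which $g$ satisfies two-sided bounds and a Poincar\'e inequality yields a strong $L^2$-bound on $\nabla g$; H\"older's inequality then converts this into the desired weighted $L^4$ lower bound.

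First I would record the basic size estimates. Because $B\in\FF_k$ is centered at $x\in G$, applying \rf{eqmu99} to $\tfrac12 B$ and combining with the global upper-density bound gives $\mu(B)\approx r(B)^s$. Since $x\in G_0\subset\partial\Omega\setminus\wt V$, no ball $B(x,r)$ with $r\in(\rho_0,r(B_0)]$ is bad, so $r_0(x)=\rho_0$ and $d(x)=\rho_0\leq\tfrac12\eta_{k+1}\leq\tfrac12\ve_0 r(B)$. Lemma \ref{lemupom} then yields $\omega(cB)\lesssim A\mu(32cB)\lesssim Ar(B)^s$ for moderate $c$, while \rf{eqom99} gives the matching lower bound, so $\omega(B)\approx A\mu(B)\approx Ar(B)^s$.

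Next I would produce a corkscrew point $\xi\in\wt B\cap\Omega$ with $\dist(\xi,\partial\Omega)\gtrsim r(B)$ together with a Harnack-connected subregion $R\subset\wt B$ of volume $\approx r(B)^{n+1}$. The crucial observation is that every $B_i$ with $6B_i\cap B\neq\varnothing$ has very small radius: the Lipschitz bound $d(x_i)\leq d(x)+|x-x_i|\leq\rho_0+r(B)+6r(B_i)$ combined with the identity $d(x_i)=2000r(B_i)$ forces $r(B_i)\leq r(B)/997$. Consequently every such $6B_i$, as well as every $B'\in\FF_{k+1}$ (which has $r(B')\leq\ve_0 r(B)$ and is centered on $\partial\Omega$), lies in the $r(B)/150$-neighborhood of $\partial\Omega$, and so a point $\xi$ at distance $\gtrsim r(B)/100$ from $\partial\Omega$ automatically avoids every excluded ball. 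The main obstacle is producing $\xi$ using only CDC: the non-usefulness of $B$,
$$\mu\Bigl(\bigcup_{i\in I_b:6B_i\cap B\neq\varnothing}960B_i\Bigr)\leq\ve_1\mu(B),$$
together with Lemma \ref{lemballs} and CDC translates, for $\ve_0,\ve_1$ small (depending only on $n$), into a density statement ensuring that a positive-volume portion of $B\cap\Omega$ remains at distance $\gtrsim r(B)$ from $\partial\Omega$, from which $\xi$ and $R$ are extracted.

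At $\xi$, Lemma \ref{l:w>GG} applied with $CB$ and CDC gives $g(y)\lesssim Ar(B)^{a+1}$ for every $y\in B\cap\Omega$, and a barrier argument in the spirit of the proof of Lemma \ref{lembourgain} produces the matching lower bound $g(\xi)\gtrsim Ar(B)^{a+1}$, propagated by Harnack to all of $R$. Extending $g$ by zero to $\Omega^c$ places $g|_B\in W^{1,2}(B)$ with vanishing on $B\cap\Omega^c$, a set of capacity $\gtrsim r(B)^{n-1}$ by CDC, so Poincar\'e yields
$$\int_B|\nabla g|^2\,dx\gtrsim\frac{1}{r(B)^2}\int_B g^2\,dx\gtrsim g(\xi)^2 r(B)^{n-1},$$
and Caccioppoli applied on each $B'\in\FF_{k+1}$ and each $6B_i$, mirroring \rf{eqnap2}-\rf{eqnap7}, bounds $\int_{B\setminus\wt B}|\nabla g|^2$ by a small multiple of the right-hand side, leaving $\int_{\wt B}|\nabla g|^2\,dx\gtrsim g(\xi)^2 r(B)^{n-1}$. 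Noting that $\psi\equiv1$ on $\wt B$ by Lemma \ref{lemad4}(a), that $\alpha-2<0$ and $g\lesssim g(\xi)$ on $\wt B$ (so $g^{\alpha-2}\gtrsim g(\xi)^{\alpha-2}$ there), and that $|R|\approx r(B)^{n+1}$, H\"older gives
$$\int_{\wt B}|\nabla g|^4 g^{\alpha-2}\psi\,dx\gtrsim g(\xi)^{\alpha-2}\frac{\bigl(\int_R|\nabla g|^2\,dx\bigr)^2}{|R|}\gtrsim g(\xi)^{\alpha+2}r(B)^{n-3}.$$
Substituting $g(\xi)\approx Ar(B)^{a+1}$ and using the identity $(a+1)(\alpha+2)=3+a$, which follows from the defining relation $\alpha(1+a)=1-a$, the right-hand side becomes $\approx A^{\alpha+2}r(B)^{n+a}=A^{\alpha+2}r(B)^s\approx A^{\alpha+2}\mu(B)$, as required. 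The hardest part will be the corkscrew/thick-region construction in step two, where converting the $\mu$-smallness of the $I_b$-obstructions into a Lebesgue-smallness statement calls on both CDC and the finite-overlap structure of Lemma \ref{lemballs}.
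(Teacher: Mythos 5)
Your proposal has the right arithmetic and the right target, but the two key geometric/analytic inputs you rely on are precisely the ones the paper is designed to avoid, and they do not follow from the hypotheses.

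\textbf{Gap 1: The corkscrew point $\xi$.} You assert that the non-usefulness condition $\mu\bigl(\bigcup_{i\in I_b:6B_i\cap B\neq\varnothing}960B_i\bigr)\leq\ve_1\mu(B)$, combined with the finite overlap of the $B_i$ and the CDC, forces a positive-Lebesgue-volume subset of $B\cap\Omega$ to lie at distance $\gtrsim r(B)$ from $\partial\Omega$. This cannot work: the non-usefulness bound controls the $\HH^s|_E$-measure of a collection of balls, and $\mu$ is carried by $E\subset\partial\Omega$, so the inequality says nothing about the Lebesgue measure of tubular neighborhoods of $\partial\Omega$ in $B$. The CDC controls capacity from below, not the geometry of $\Omega$ from inside, and does not rule out $\partial\Omega$ being dense (in the sense of hitting every $cr(B)$-ball) throughout $B$. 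The paper assumes no interior corkscrew condition, so you cannot manufacture one from the available data.

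\textbf{Gap 2: The lower bound on $g(\xi)$.} Even granting a corkscrew, you invoke ``a barrier argument in the spirit of Lemma \ref{lembourgain}'' to get $g(\xi)\gtrsim Ar(B)^{a+1}$. Lemma \ref{lembourgain} gives a lower bound for harmonic measure $\omega^x(B)$ with $x\in\frac14 B$; converting this to a lower bound for the Green function $g(\cdot,p)$ at a corkscrew of $B$ with the pole $p$ far from $B$ is exactly the estimate that requires Harnack chains (quantitative connectivity). The paper provides only the upper bound $g\lesssim\omega(CB)/r(B)^{n-1}$ in Lemma \ref{l:w>GG}; there is no matching lower bound available without a connectivity hypothesis, and none is claimed or used.

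\textbf{What the paper does instead.} It never touches pointwise lower bounds on $g$. The lower input is the $L^1$ bound
$$\frac{1}{r(B)}\int_B|\nabla g|\,dx\gtrsim\Bigl|\int\nabla g\cdot\nabla\vphi_B\,dx\Bigr|=\int\vphi_B\,d\omega\geq\omega(\tfrac12B)\gtrsim\kappa_0 h(x_0)^{-1}\mu(\tfrac12 B),$$
valid for any domain, because it only uses that $\omega$ is (up to sign) the distributional normal derivative of $g$. After subtracting the contributions of $B\setminus\wt B$ (split exactly as in your sketch into $\FF_{k+1}$-balls, small $B_i$ with $i\in I\setminus I_b$, and large $B_i$ with $i\in I_b$, the last being controlled by the non-usefulness condition), the paper applies H\"older to $\int_{\wt B}|\nabla g|$ against $\bigl(\int_{\wt B}|\nabla g|^4 g^{\alpha-2}\bigr)^{1/4}\bigl(\int_B g^{(2-\alpha)/3}\bigr)^{3/4}$, and uses only the \emph{upper} bound $g\lesssim A\mu(B)/r(B)^{n-1}$ to estimate the second factor. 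This is why the argument survives without uniformity. Your proposal, as written, would essentially re-prove the Azzam--Mourgoglou version, not the strengthening that is the point of this paper.
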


\begin{proof}
Denote by $x_B$ the center of $B$ and let
$$\vphi_B(y) = \vphi\Big(\frac{y-x_B}{\tfrac12r(B)}\Big),$$
where $\vphi$ is the radial $C^\infty$ function appearing in \rf{eqphi1}, so that $\supp\vphi_B\subset B$, $\vphi\equiv1$ in $\frac12B$, and
$\|\nabla\vphi_B\|_\infty\lesssim1/r(B)$.
Then we have
$$\frac1{r(B)} \int_{B} |\nabla g|\,dx \gtrsim  \left|\int \nabla g\cdot \nabla\vphi_B\,dx\right| = \int \vphi_B\,d\omega
\geq \omega(\tfrac12B) \geq  \kappa_0\,h(x_0)^{-1}\,\mu(\tfrac12B),$$
taking into account \rf{eqom99} for the last inequality.

Next we will show that $\frac1{r(B)}\int_{B\setminus \wt B} |\nabla g|\,dx$ is small if the parameters
$\ve_0$ and $\ve_1$ above are small. To this end, given any ball $B'$ intersecting $B$ and centered at $x_{B'}\in B_0\cap \partial\Omega$ with radius $r(B')\in [d(x_{B'}), r(B)]$, we write:
\begin{align*}
\int_{B\cap B'} |\nabla g|\,dx \lesssim {r(B')^n}\,
\sup_{2B'}g(x),
\end{align*}
applying H\"older's inequality and Caccioppoli's inequality.
By \rf{eqgreen2}, the last supremum can be bounded above by $\omega(16B')\,r(B')^{1-n}$, and then we get
\begin{equation}\label{eqfac43}
\int_{B\cap B'} |\nabla g|\,dx \lesssim
r(B')\,\omega(16B').
\end{equation}

We split
\begin{equation}\label{eqspl93}
\int_{B\setminus \wt B} |\nabla g|\,dx \leq
\int_{B\cap \bigcup_{B'\in \FF_{k+1}}B'}|\nabla g|\,dx  + \int_{B\cap \bigcup_{i\in I\setminus I_b} 6B_i} |\nabla g|\,dx 
+\int_{B\cap \bigcup_{i\in I_b} 6B_i}|\nabla g|\,dx.
\end{equation}
To deal with the first integral on the right hand side, recall that the balls $16B'$, with $B'\in \FF_{k+1}$,  are disjoint and  their radius is at most 
$\ve_0\,r(B)$, by construction. Thus,
$$\frac1{r(B)}\int_{B\cap \bigcup_{B'\in \FF_{k+1}}B'}|\nabla g|\,dx \lesssim \frac1{r(B)}
\sum_{B'\in \FF_{k+1}:16B'\subset 2B}
r(B')\,\omega(16B') \lesssim \ve_0\,\omega(2B).$$
The second integral on the right hand side of \rf{eqspl93} is estimated analogously. In this case we use that all the balls $B_i$ with $i\in I\setminus I_b$ have radius equal to $\rho_0/2000\ll r(B)$, and that the balls $96B_i$, $i\in I\setminus I_b$,
have bounded overlap, by Lemma \ref{lemballs} (d). Then, by \rf{eqfac43}, we deduce
$$\frac1{r(B)}\int_{B\cap \bigcup_{i\in I\setminus I_b} 6B_i}|\nabla g|\,dx \lesssim \frac1{r(B)}
\sum_{i\in I\setminus I_b: 96B_i\subset 2B}
r(B_i)\,\omega(96B_i) \lesssim \ve_0\,\omega(2B).$$

Regarding the last integral on the right hand side of \rf{eqspl93}, we will use the fact that
\begin{equation}\label{eqassu9}
\mu\bigg(\bigcup_{i\in I_b:6B_i\cap B\neq\varnothing} 960B_i\bigg) \leq \ve_1\,\mu(B),
\end{equation}
because $B$ is assumed to be not useless. Note that given $i\in I_b$ such that $B\cap 6B_i\neq \varnothing$, we have $r(6B_i)\leq r(B)$. Otherwise, $B\subset18B_i$, which violates the condition \rf{eqassu9}. Then,
applying \rf{eqfac43} again, we deduce
\begin{align*}
\frac1{r(B)}\int_{B\cap \bigcup_{i\in I_b} 6B_i}|\nabla g|\,dx & \lesssim \frac1{r(B)}
\sum_{i\in I_b: 6B_i\cap B\neq\varnothing}
r(B_i)\,\omega(96B_i) \lesssim 
\sum_{i\in I_b: 6B_i\cap B\neq\varnothing}
\omega(96B_i) \\
&\lesssim  A\!\sum_{i\in I_b: 6B_i\cap B\neq\varnothing}\!\!\!
\mu(960B_i) \lesssim A \,\mu\bigg(\bigcup_{i\in I_b: 6B_i\cap B\neq\varnothing} \!\!\!960B_i\bigg)\lesssim A\ve_1\mu(2B),
\end{align*}
by the finite superposition of the balls $960B_i$ and \rf{eqassu9}.

We take into account now that, by \rf{eqom99'},  the $s$-growth of $\mu$, and \rf{eqmu99},
$$\omega(2B)\lesssim h(x_0)^{-1}\,r(2B)^s \approx h(x_0)^{-1}\,r(\tfrac12B)^s \lesssim 
h(x_0)^{-1}\,\mu(\tfrac12B).$$
Therefore,
$$\frac1{r(B)}\int_{B\cap \bigcup_{B'\in \FF_{k+1}}B'} |\nabla g|\,dx 
+ \frac1{r(B)}\int_{B\cap \bigcup_{i\in I\setminus I_b} 6B_i}|\nabla g|\,dx 
\lesssim \ve_0\,h(x_0)^{-1}\,\mu(\tfrac12B).$$
Then, using also that $\mu(\tfrac 12B)\gtrsim r^s \gtrsim \mu(2B)$, $h(x_0)^{-1}\approx A$, and recalling the splitting \rf{eqspl93}, we deduce
\begin{align*}
\frac1{r(B)}\int_{\wt B} |\nabla g|\,dx & \geq  (\kappa_0- C\ve_0 - C\ve_1)\,h(x_0)^{-1}\,\mu(\tfrac12B)\\
& \geq \frac{\kappa_0}2\,h(x_0)^{-1}\,\mu(\tfrac12B)\approx \kappa_0 \,A\,\mu(\tfrac12B)\approx \kappa_0 \,A\,\mu(B),
\end{align*}
assuming $\ve_0$ and $\ve_1$ small enough.

\vv

Next, applying H\"older's inequality, we get
$$\int_{\wt B} |\nabla g|\,dx
\leq \left(\int_{\wt B} |\nabla g|^4\,g^{\alpha-2}\,dx\right)^{1/4}
\left(\int_B g^{(2-\alpha)/3}\,dx\right)^{3/4}.$$
To estimate the last integral on the right hand side we take into account that, by \rf{eqgreen2} and  \rf{eqom99'},
$$g(x)\lesssim\frac{\omega(8B)}{r(B)^{n-1}}\lesssim A\,\frac{\mu(80B)}{r(B)^{n-1}} \approx A\,\frac{\mu(B)}{r(B)^{n-1}} $$
for all $x\in B$. Hence,
$$\int_B g^{(2-\alpha)/3}\,dx\lesssim A^{(2-\alpha)/3}\mu(B)^{(2-\alpha)/3}r(B)^{n+1-(2-\alpha)(n-1)/3}.$$
Therefore,
\begin{align*}
\int_{\wt B} |\nabla g|^4\,g^{\alpha-2}\,dx &\gtrsim 
\bigg(\int_{\wt B} |\nabla g|\,dx\bigg)^4\left(A^{(2-\alpha)/3}\mu(B)^{(2-\alpha)/3}r(B)^{n+1-(2-\alpha)(n-1)/3}\right)^{-3}\\
& \gtrsim \big(\kappa_0 \,A\,r(B)\,\mu(B)\big)^4\left(A^{(2-\alpha)/3}\mu(B)^{(2-\alpha)/3}r(B)^{n+1-(2-\alpha)(n-1)/3}\right)^{-3}\\
& = \kappa_0^4  A\mu(B)\,\bigg(\frac{A\mu(B)}{r(B)^s}\bigg)^{1+\alpha}\approx \kappa_0^4  A^{2+\alpha}\mu(B).
\end{align*}
To finish the proof of the lemma it just remains to notice that $\kappa_0$ is some absolute constant
depending just on $n$ and that $\psi =1$ on $\wt B$.
\end{proof}

\vv

Now we are ready to obtain the lower estimate for $I_0$ required to complete the proof of Theorem
\ref{teo1}. Let $N>1$ be an arbitrarily large integer, and choose $\rho_0\in (0,\frac12\eta_{N+1}]$ such that $\rho_0\ll\rho_1$ too.
Let $k_0$ be the minimal integer such that $2\eta_{k_0}\leq \rho_1$.
If $B\in \FF_k$ with $k\in [k_0,N]$, then $r(B)\leq \rho_1$ by construction and so
$B\not\in\uss$ (since $B$ is centered in $U(\rho_1)^c$). Then we can apply Lemma \ref{lemesti0}
to deduce that
$$
\int_{\wt B} |\nabla g|^4\,g^{\alpha-2}\,\psi\,dx \gtrsim A^{\alpha+2}\mu(B),
$$
with $\wt B$ defined in \rf{eqwtb1}. Then we get
\begin{align*}
\int |\nabla g|^4\,g^{\alpha-2}\,\psi\,dx & \geq \sum_{k=k_0}^N
\sum_{B\in\FF_k} \int_{\wt B} |\nabla g|^4\,g^{\alpha-2}\,\psi\,dx  \gtrsim \sum_{k=k_0}^N
\sum_{B\in\FF_k} A^{\alpha+2}\mu(B),
\end{align*}
using the fact that the regions $\wt B$ above do not overlap.
Now, by the doubling property of the balls from $\FF_k$ and \rf{eqmu995}, for each $k$ we have
$$\sum_{B\in\FF_k} \mu(B)\approx \sum_{B\in\FF_k} \mu(80B)\geq \frac12 \,\mu(G) \geq \frac18\,\mu(\tfrac12 B_0).
$$
Thus,
$$I_0=\int |\nabla g|^4\,g^{\alpha-2}\,\psi\,dx\gtrsim (N-k_0)A^{\alpha+2}\mu(\tfrac12 B_0).$$
Taking $N$ big enough, we contradict \rf{eqi01}, as wished.

% ********************************************************************************
% ********************************************************************************

\vvv

\enlargethispage{1cm}

\end{document}